\newtheorem{algorithm}[thm]{Algorithm}
\newcommand\ssum{\textstyle\sum\limits}
\newcommand{\norm}[1]{\left\lVert#1\right\rVert}
\DeclareMathOperator{\LT}{LT_\sigma}
\DeclareMathOperator{\LC}{LC_\sigma}
\DeclareMathOperator{\LM}{LM_\sigma}
\DeclareMathOperator{\NF}{NF}
\DeclareMathOperator{\Spec}{Spec}
\DeclareMathOperator{\minPrimes}{minPrimes}
\DeclareMathOperator{\RV}{RV}
\DeclareMathOperator{\Rad}{Rad}
\DeclareMathOperator{\Ker}{Ker}
\DeclareMathOperator{\length}{length}
\DeclareMathOperator{\rank}{rank}
\DeclareMathOperator{\Mat}{Mat}
\DeclareMathOperator{\Syz}{Syz}
\DeclareMathOperator{\sqfree}{sqfree}
\DeclareMathOperator{\Frob}{Frob}
\newcommand\Z{\mathbb{Z}}
\newcommand\F{\mathbb{F}}
\newcommand\Q{\mathbb{Q}}
\title{Efficient Algorithms for Finite $\Z$-Algebras}
\author[M.~Kreuzer]{Martin Kreuzer}
\address{Fakult\"{a}t f\"{u}r Informatik und Mathematik \\
Universit\"{a}t Passau, D-94030 Passau, Germany}
\email{martin.kreuzer@uni-passau.de}
\author[F.~Walsh]{Florian Walsh}
\address{Fakult\"{a}t f\"{u}r Informatik und Mathematik \\
Universit\"{a}t Passau, D-94030 Passau, Germany}
\email{florian.walsh@uni-passau.de}
\begin{document}

\begin{abstract}
For a finite $\Z$-algebra $R$, i.e., for a $\Z$-algebra which is a finitely 
generated $\Z$-module, we assume that~$R$ is explicitly given by a system of 
$\Z$-module generators~$G$, its relation module $\Syz(G)$, and the structure 
constants of the multiplication in~$R$. In this setting we develop and analyze 
efficient algorithms for computing essential information about~$R$. First we provide polynomial
time algorithms for solving linear systems of equations over~$R$ and for basic
ideal-theoretic operations in~$R$. Then we develop ZPP (zero-error probabilitic polynomial time)
algorithms to compute the nilradical and the maximal ideals of 0-dimensional affine algebras
$K[x_1,\dots,x_n]/I$ with $K=\Q$ or $K=\F_p$. The task of finding the associated primes
of a finite $\Z$-algebra~$R$ is reduced to these cases and solved in ZPPIF (ZPP plus one
integer factorization). With the same complexity, we calculate the connected components 
of the set of minimal associated primes $\minPrimes(R)$ and then the primitive idempotents of~$R$.
Finally, we prove that knowing an explicit representation of~$R$ is polynomial time
equivalent to knowing a strong Gr\"obner basis of an ideal~$I$ such that 
$R = \Z[x_1,\dots,x_n]/I$.
\end{abstract}

\keywords{Finite $\mathbb{Z}$-algebra, efficient algorithm, polynomial complexity, 
primary decomposition, primitive idempotents}

\subjclass{Primary 13P99; Secondary 68W39, 13P10, 68Q15}

\maketitle

%
%

\section{Introduction}
\label{sec1}

Computing the radical and the primary decomposition of an ideal, the associated primes and 
the primitive idempotents of an algebra, or the connected components of its spectrum, are 
among the hardest tasks in Computer Algebra. For a finitely generated algebra $R=K[x_1,\dots,x_n]/I$
over a field~$K$ with an ideal~$I$ that is given by its generators, the usual solutions of these tasks
involve computing Gr\"obner bases and factoring multivariate polynomials over extension fields of~$K$
(see for instance~\cite{DGP}, \cite{GWW}, \cite{vzGG}, \cite{KR1}, \cite{KR2}, \cite{KR3}).

The difficulty of the problem increases further when we consider algebras over the integers, i.e., algebras
of the form $R=\Z[x_1,\dots,x_n]/I$ with an ideal~$I$ given by a system of generators.
In this case we will also have to factor (potentially large) integers, as already the example
$R=\Z/ n\Z$ shows. Since the 1970s, various approaches have been taken to tackle these tasks, starting
with the case of an algebra~$R$ which is a finitely generated $\Z$-module (see~\cite{AL}, \cite{Ayo}, \cite{Se},
\cite{PSS}). At the core of most of these algorithms lies the calculation of strong Gr\"obner bases
for ideals in $\Z[x_1,\dots,x_n]$ which tends to be quite demanding.
It is also possible to apply more general algorithms for associative, not necessarily
commutative algebras here (see~\cite{DK}, \cite{EG}, \cite{Ro}), but we can expect the efficiency
of such very general methods to be usually even lower than the ones for commutative algebras.

The situation changes substantially when a $\Z$-algebra~$R$ is {\it explicitly given},
i.e.\ a $\Z$-algebra for which we know a system of generators $G=(g_0,\dots,g_n)$ of its additive group, 
a system of generators of the $\Z$-linear relation module $\Syz_\Z(G) \subseteq \Z^{n+1}$, 
and the {\it structure constants} $c_{ijk}\in\Z$ such that $g_i \cdot g_j = \sum_{k=0}^n c_{ijk} g_k$ 
for $i,j=0,\dots,n$. 

To define algebras by their module generators and relations, as well as their structure constants,
is a very classical approach, followed for instance by Bourbaki in~\cite{Bou}, Ch.~III, \S 1, Sect.~7.
Knowing the structure constants is equivalent to knowing the multiplication matrices of an algebra.
In particular, when the algebra is defined over a field, this point of view is one of the key methods to
study 0-dimensional affine algebras and to solve 0-dimensional polynomial systems 
(see for instance~\cite{Cox} and~\cite{KR3}).

In a recent paper~\cite{KMW}, we encountered explicitly given $\mathbb{Z}$-algebras in a different way:
when a non-commutative algebra is given by representing its left and right multiplications via 
endomorphisms, one can efficiently compute a ring of scalars. As a result, the ring of scalars 
is an explicitly given, finite commutative $\Z$-algebra.
For further examination of the original rings, it is necessary to find algorithms that perform
some of the operations mentioned above on the ring of scalars. In~\cite{KMW} we formulated a 
few of those algorithms using the calculation of strong Gr\"obner bases.
Here we avoid Gr\"obner bases and provide precise worst-case complexity bounds which are {\it almost}
polynomial time.

Thus the main task tackled in this paper can be described as follows: assume that a finite $\Z$-algebra~$R$
is given explicitly, i.e., by generators, relations, and structure constants. Develop algorithms
for computing their nilradical, associated primes and maximal ideals, primitive idempotents and connected components
of $\Spec(R)$ with the lowest possible worst-case complexity. More precisely, we shall show that
all these tasks can be solved in ZPPIF, i.e., in zero-error probabilistic polynomial time plus possibly 
one integer factorization.

Let us discuss the contents of the paper in more detail.
Throughout we work with an explicitly given finite $\Z$-algebra~$R$ defined as above.
In Section~2 we start by using the well-known facts that the Smith and Hermite normal forms of an 
integer matrix can be calculated in polynomial time (see~\cite{KaBa}, \cite{Laz}, \cite{Sj1}, \cite{Sj2}), 
in order to construct a polynomial time algorithm for solving linear systems of equations over~$R$
(see Proposition~\ref{prop:linear_sys}).
Based on this algorithm we perform various operations with ideals in~$R$ efficiently, for instance 
ideal intersections (see Proposition~\ref{prop:elementary_computations}). Moreover, we are able
to compute preimages under the isomorphism given by the Chinese Remainder Theorem 
(see Proposition~\ref{prop:preimage}).

In Section~3 we prepare later applications to $\Z$-algebras by reconsidering and reanalyzing some
algorithms for 0-dimensional algebras over a field~$K$. In order to compute the nilradical
of an explicitly given $K$-algebra~$R$, we need to calculate the factorization of univariate polynomials 
over~$K$. The currently best algorithms have polynomial time complexity (P) in the case $K=\Q$
and zero-error probabilistic polynomial time complexity (ZPP) in the case of a prime field~$\F_p$.
Using such polynomial factorization algorithms, Algorithm~\ref{alg:radical} then determines the nilradical 
of~$R$ with these time complexities (P resp.\ ZPP).

Next we want to compute the primary decomposition of the zero ideal of~$R$. In the case $K=\F_p$, we apply
the method of Frobenius spaces (see~\cite{KR3}, Alg.\ 5.2.7) and get an algorithm 
in ZPP (see Algorithm~\ref{alg:primary_decomp}). In the case $K=\Q$, we can use the method of~\cite{KR3}, Alg.~5.4.2
and get an algorithm in~P.
Altogether, by applying the primary decomposition algorithms to $R/\Rad(0)$, 
we are able to find the maximal ideals of~$R$ 
in~P and ZPP for $K=\Q$ and $K=\F_p$, respectively (see Corollary~\ref{cor:complexityofmax}).

In Section~4 we then use the ideas of~\cite{PSS} to compute the associated primes of an explicitly given
finite $\Z$-algebra~$R$ (see Algorithm~\ref{alg:ass_primes}). The method is to distinguish between the prime ideals
which contain an integer prime number and those which don't. In both cases the computation is reduced to
the setting of the preceding section, i.e., to computations of 0-dimensional algebras over fields.
Since the determination of associated primes may involve the factorization of
an integer, the best time complexity we can achieve here is ZPPIF, i.e., ZPP plus one integer factorization.
More precisely, the integer which has to be factored is the torsion exponent of the additive group of~$R$.

In Section~5 we treat the next topics, namely the computation of the primitive idempotents of an explicitly given
finite $\Z$-algebra~$R$ and the connected components of its prime spectrum.
It turns out that it is advisable to solve the latter task first. 
More precisely, since~$R$ may have infinitely many prime ideals, we compute the
connected components of the set of minimal associated primes $\minPrimes(R)$ in Algorithm~\ref{alg:connected}.
This algorithm uses the results of the preceding section, whence its worst-case time complexity is in ZPPIF.
Finally, we calculate the primitive idempotents of~$R$ in Algorithm~\ref{alg:comp_prim_idemp}
by lifting them from the primitive idempontents of $R/\Rad(0)$. This lifting process is performed using 
Algorithm~\ref{alg:lifting}. Once again the worst-case time complexity is in ZPPIF.

In the last section we connect the method of using an explicit representation of~$R$
to the more traditional method of calculating a strong Gr\"obner basis, when~$R$ is given as
$R=P/I$ with $P=\Z[x_1,\dots,x_n]$ and an ideal~$I$ in~$P$ whose generators are known.
Starting with an explicitly given finite $\Z$-algebra~$R$,
we can compute in polynomial time a strong Gr\"obner basis of a defining ideal~$I$ of~$R$ 
(see Corollary~\ref{cor:CompStrongGB}). For this direction we use a generalization
of the Buchberger-M\"oller Algorithm (see Algorithm~\ref{alg:BMintersection}).
Conversely, starting with a strong Gr\"obner basis of~$I$, we can calculate
an explicit representation of~$R$ in polynomial time (see Algorithm~\ref{alg:module_presentation}).
For this direction, we use a generalization of Macaulay's Basis Theorem to finite $\Z$-algebras
(see Proposition~\ref{prop:macaulay}). 

For the notation and basic definitions we adhere to conventions in~\cite{KR1} and~\cite{KR2}.
All algorithms in this paper were implemented in the computer algebra system ApCoCoA (see~\cite{ApCoCoA})
and are available from the authors upon request. These implementations were used to calculate the examples
given in the various sections.

\bigskip\bigbreak
%
%

\section{Polynomial Time Computations in Finite $\Z$-Algebras}
\label{sec2}

Let $R$ be a finite $\Z$-algebra, i.e., a $\Z$-algebra which is a finitely generated $\Z$-module. 
We denote the additive group of $R$ by $R^+$.
In this section we collect operations in $R$ which can be computed in polynomial time if a presentation of $R$
is given as below.

\begin{rem}{\bf (Explicitly Given $\Z$-Algebras)}\label{remark:input}\\
Subsequently we assume that a $\Z$-algebra $R$ is given by the following information.
\begin{enumerate}
\item[(a)] A set of generators $\mathcal{G} = \{g_0, \dots, g_n\}$ of the $\Z$-module $R^+$, together with
a matrix $A = (a_{\ell k}) \in \Mat_{m,n+1}(\Z)$ whose rows generate the syzygy module
$\Syz_{\Z}(\mathcal{G})$ of $\mathcal{G}$.

\item[(b)] Structure constants $c_{ijk}$ such that $g_i g_j = \sum_{k=0}^n c_{ijk} g_k$ for $i,j = 0, \dots, n$.
\end{enumerate}
Notice that we can assume that $g_0 = 1$, and encode this information as an ideal
$$
I = \langle x_i x_j - \ssum_{k=0}^n c_{ijk} x_k, \; \ssum_{k=0}^n a_{\ell k} g_k
\mid i,j=1, \dots, n, \; \ell=1, \dots, m \rangle
$$
in $P = \Z[x_1, \dots, x_n]$ such that $R \cong P/I$.
If $R$ is given as above, we call it an \textbf{explicitly given $\Z$-algebra}.
\end{rem}

The bit complexity of the matrix $A$ in~(a) which defines the $\Z$-module structure of $R^+$ is given by
$$
\beta = (n+1)m \log_2(\norm{A}) \quad \text{with} \quad \norm{A} = \max\{\lvert a_{\ell k} \rvert \}.
$$
The bit complexity of the entire input defining the $\Z$-algebra $R$ is then given by
$$
\gamma = ((n+1)^3+(n+1)m) \log_2(M) \quad \text{with} \quad M = 
\max\{\lvert a_{\ell k}\rvert, \lvert c_{ijk} \rvert\}.
$$

In this section we collect computations in $R$ which can be performed in polynomial time in~$\beta$
or~$\gamma$, respectively. More precisely, we will use the following complexity classes.

\begin{defi}{\bf (Polynomial Time Complexity Classes)}\\
Consider an algorithm which takes a tuple of integers as input.
\begin{enumerate}
\item[(a)] The algorithm is in the complexity class \textbf{P (polynomial time)} if its running time is bounded by a
polynomial expression in the bit complexity of the input.

\item[(b)] The algorithm is in the complexity class \textbf{ZPP (zero-error probabilistic polynomial time)} if it is
a Las Vegas algorithm which has polynomial running time in the bit complexity of the input.

\item[(c)] The algorithm is in the complexity class \textbf{ZPPIF (zero-error probabilistic polynomial time plus
integer factorization)} if, except for the factorization of one integer, the algorithm is in ZPP and the
bit size of the integer to be factored is bounded by a polynomial expression in the bit complexity of the input.

\end{enumerate}
\end{defi}

It is useful to bring the $\Z$-module presentation of $R^+$ into a normal form.

\begin{rem}\label{remark:smith}
Let $A = (a_{ij}) \in \Mat_{m,n+1}(\Z)$ be the matrix whose rows are given by the generators
of~$\Syz(\mathcal{G})$. Then there exist unimodular transformation matrices $S \in \Mat_{m,m}(\Z)$ and
$T \in \Mat_{n+1,n+1}(\Z)$ such that
$$ 
S \cdot A \cdot T =
        \begin{pmatrix}
            k_1 & 0 & \cdots & \cdots & \cdots & 0 \\
            0 & \ddots & \ddots & & & \vdots \\
            \vdots & \ddots & k_u & \ddots & & \vdots \\
            \vdots & & \ddots & 0 & \ddots & \vdots \\
            \vdots & & & \ddots & \ddots & 0 \\
            0 & \cdots & \cdots & \cdots & 0 & 0
        \end{pmatrix}
$$
and $k_i$ divides $k_j$ for $i<j$. This matrix is called the \textbf{Smith normal form} of $A$.
It yields the following isomorphism:
$$
R^+ \cong \Z^r \oplus \Z/k_1 \Z \oplus \cdots \oplus \Z/k_u \Z.
$$
The numbers $r$ and $k_1, \dots, k_u$ are uniquely determined by $R^+$. We call $r$ the \textbf{rank} and
$k_1, \dots, k_u$ the \textbf{invariant factors} of~$R^+$. The largest invariant factor $k_u$ is the exponent
of the torsion subgroup of $R^+$. We call it the \textbf{torsion exponent}~$\tau$ of~$R^+$.
\end{rem}

In the following we shall show that certain algorithms run in polynomial time 
by reducing them to the following computations.

\begin{rem}{\bf (Complexity of Integer Linear Algebra Operations)}\label{remark:computs_over_Z}
\begin{enumerate}
\item[(a)] The Smith and the Hermite normal form of a matrix $A \in \Mat_{m,n}(\Z)$ can be computed in
polynomial time, as first shown by Kannan and Bachem \cite{KaBa} in 1979. Currently, the fastest
deterministic algorithm for computing the Smith normal form is the one developed by Storjohann~\cite{Sj1}.
Note that in contrast to~\cite{KaBa} this algorithm does not produce the unimodular transformation matrices.

\item[(b)] Solving linear systems of equations over the integers can be reduced to computing a Smith normal form
together with the unimodular transformation matrices (see for instance~\cite{Laz}). If the linear system
is of the form $Ax=b$ where $A \in \Mat_{m,n}(\Z)$ and $b \in \Mat_{n,1}(\Z)$,
then generators of the solution space can be computed in polynomial time in $n$, $m$, $\norm{A}$, $\norm{b}$
and the rank of $A$. Here $\norm{A}$ denotes the maximal absolute value of the entries of $A$.
A concrete complexity bound is given in~\cite{Sj2}, Theorem~19.

\item[(c)] Computing the intersection of free submodules of $\Z^n$ can be achieved by computing a basis of
the solution space of an appropriate linear system of equations. The problem therefore reduces to~(b).
            
\end{enumerate}
\end{rem}

Since the Smith normal form can be computed in polynomial time, it follows that the bit complexity of the torsion
exponent of~$R$ is bounded by a polynomial in~$\beta$. Below we give a concrete complexity bound.

\begin{lem}\label{lemma:exponent_bound}
Let $R$ be an explicitly given finite $\Z$-algebra. Then the bit complexity of the torsion exponent $\tau$
is bounded by $n \log_2(n\norm{A})$.
\end{lem}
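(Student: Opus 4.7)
The plan is to identify $\tau$ with a determinantal invariant of~$A$ and then bound that invariant via Hadamard's inequality.

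First, I would invoke Remark~\ref{remark:smith} to write $\tau = k_u$, the largest invariant factor in the Smith normal form of~$A$. Since the $k_i$ form a divisibility chain $k_1 \mid k_2 \mid \cdots \mid k_u$, one has $\tau = k_u \leq k_1 k_2 \cdots k_u$. The product $k_1 \cdots k_u$ coincides with the $u$-th determinantal divisor $d_u$ of~$A$, i.e., with the greatest common divisor of all $u \times u$ minors of~$A$; this is the classical characterization of invariant factors via determinantal divisors over a principal ideal domain. In particular $\tau$ divides, and hence is at most the absolute value of, any nonzero $u \times u$ minor of~$A$.

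Second, I would apply Hadamard's inequality: for a $u \times u$ integer submatrix $B$ of~$A$ whose entries are bounded in absolute value by $\norm{A}$, each column of $B$ has Euclidean norm at most $\sqrt{u}\,\norm{A}$, whence
$$
|\det(B)| \;\leq\; u^{u/2}\,\norm{A}^u.
$$
Combining the two steps, and using that the number~$u$ of nonzero invariant factors is at most~$n$, one obtains $\tau \leq n^{n/2}\,\norm{A}^n \leq (n\,\norm{A})^n$, so that $\log_2(\tau) \leq n \log_2(n\,\norm{A})$, which is exactly the stated bound.

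The algebraic ingredient (the identification $k_1 \cdots k_u = d_u$) is standard, and the Hadamard estimate is a one-line computation, so the conceptual content is light. The one point I expect to require care is the estimate $u \leq n$ on the number of nonzero invariant factors: since $A$ has $n+1$ columns, one only gets $u \leq n+1$ for free, which would introduce an off-by-one in the exponent of $\norm{A}$. To recover the precise bound in the lemma I would use the $\Z$-algebra structure on $R$, specifically the presence of the unit $g_0 = 1$, to trim one generator from the effective syzygy computation before applying Hadamard's inequality. This bookkeeping at the boundary is the main obstacle I anticipate when writing out the proof cleanly.
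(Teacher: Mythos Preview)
Your approach is identical to the paper's: bound $\tau$ by the product of the invariant factors, identify that product with the gcd of maximal-rank minors of~$A$, and apply Hadamard's inequality. The off-by-one you flag is real---the paper's own proof tacitly replaces $n+1$ by~$n$ in the Hadamard step (and appears to contain a typo, writing $\norm{A}$ where $\norm{A}^n$ is meant)---but your proposed remedy via the unit $g_0=1$ does not work in general: for $R=\Z/m\Z$ with the single generator $g_0=1$ one has $n=0$ and $u=1=n+1$, and there is no generator to trim. Since the lemma's only role is to show that the bit size of~$\tau$ is polynomial in~$\beta$, the discrepancy is cosmetic; the honest bound is $(n+1)\log_2\bigl((n+1)\norm{A}\bigr)$.
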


\begin{proof}
The product of the invariant factors of $R^+$ is given by the gcd of all maximal rank minors of $A$. The torsion
exponent is therefore bounded by the absolute value of a non-zero maximal rank minor of $A$. Hadamard's inequality
then yields $\tau \leq n^{n/2} \norm{A}$, which means the bit complexity of $\tau$ is bounded by $n \log_2(n\norm{A})$.
\end{proof}

Solving a linear system of equations over $R$ can be reduced to solving a linear system over the integers.

\begin{prop}{\bf (Solving Systems of Linear Equations over~$R$)}\label{prop:linear_sys}\\
Let $R$ be an explicitly given finite $\Z$-algebra and $f_1,\dots,f_p\in R$. For $k=1,\dots,p$, we write
$f_k = b_{k0}g_0 + \cdots + b_{kn} g_n$ with $b_{kj}\in\Z$. Let $y_1,\dots,y_p$ be further indeterminates.
Consider the following homogeneous linear equation over~$R$.
\begin{equation*}
\tag{i}\label{eqn_over_R}         f_1 y_1 + \cdots + f_p y_p \;=\; 0
\end{equation*}
Let $e_0, \dots, e_n \in \Z^{n+1}$ be the standard basis vectors.
For the following system of homogeneous linear equations in the indeterminates $z_{ki}$ and $w_j$ over~$\Z$,
let $\mathcal{L}$ be the projection of the solution space onto the $z$-coordinates.
\begin{equation*}
\tag{ii}       \ssum_{k=1}^p \ssum_{i,j,\ell=0}^n z_{ki}b_{kj}c_{ij\ell} e_\ell
               - \ssum_{j=1}^m \ssum_{i=0}^n w_j a_{ij}e_i \;=\; 0
\end{equation*}
Then the following conditions are equivalent.
\begin{enumerate}
\item[(a)] A tuple $(h_1,\dots,h_p) \in R^p$ with $h_k = d_{k0}g_0 + \cdots + d_{kn}g_n \in R$ and $d_{ki} \in \Z$ 
is a solution of~(\ref{eqn_over_R}). 

\item[(b)] The tuple $(d_{ki})$ is an element of~$\mathcal{L}$.
\end{enumerate}
\end{prop}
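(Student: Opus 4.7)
My plan is to prove the equivalence directly by expanding the left-hand side of~(\ref{eqn_over_R}) in terms of the $\Z$-module generators $\mathcal{G}=\{g_0,\dots,g_n\}$ and using the two pieces of data that define the explicit presentation of~$R$: the structure constants $c_{ij\ell}$ (to carry out products) and the rows of~$A$ (to test vanishing of a $\Z$-linear combination of the~$g_\ell$ in~$R$).

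First, I would substitute $h_k=\sum_{i=0}^n d_{ki}g_i$ into $\sum_k f_k h_k$ and apply the multiplication rule $g_ig_j=\sum_{\ell=0}^n c_{ij\ell}g_\ell$. A straightforward expansion yields
$$
\sum_{k=1}^p f_kh_k \;=\; \sum_{\ell=0}^n \Bigl(\sum_{k=1}^p\sum_{i,j=0}^n d_{ki}b_{kj}c_{ij\ell}\Bigr)g_\ell.
$$
Hence $(h_1,\dots,h_p)$ satisfies~(\ref{eqn_over_R}) if and only if the coefficient tuple $\bigl(\sum_{k,i,j}d_{ki}b_{kj}c_{ij\ell}\bigr)_{\ell=0,\dots,n}\in\Z^{n+1}$ lies in the syzygy module $\Syz_\Z(\mathcal{G})$.

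Next, I would invoke the hypothesis that the rows of~$A$ generate $\Syz_\Z(\mathcal{G})$. A tuple in $\Z^{n+1}$ is a syzygy precisely when it admits a presentation as a $\Z$-linear combination of the rows of~$A$, i.e., when there exist integers $w_1,\dots,w_m$ such that
$$
\sum_{\ell=0}^n \Bigl(\sum_{k,i,j}d_{ki}b_{kj}c_{ij\ell}\Bigr)e_\ell \;=\; \sum_{j=1}^m\sum_{i=0}^n w_j a_{ji}\,e_i.
$$
Rewriting this identity as the vanishing of the difference produces exactly system~(ii), with the indeterminates $z_{ki}$ specialized to the integers $d_{ki}$. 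Since $\mathcal{L}$ is the projection of the solution space of~(ii) onto the $z$-coordinates, the tuple $(d_{ki})$ belongs to $\mathcal{L}$ if and only if suitable integers $w_1,\dots,w_m$ exist, which by the previous paragraph is equivalent to $(h_1,\dots,h_p)$ being a solution of~(\ref{eqn_over_R}). This yields the equivalence of~(a) and~(b).

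I do not anticipate any substantive obstacle, since the argument amounts to a direct translation of the defining data of an explicitly given $\Z$-algebra into a linear system over~$\Z$. The only delicate point is the bookkeeping of indices: distinguishing the syzygy row index of~$A$ (running from $1$ to $m$) from the generator index (running from $0$ to $n$), and keeping the roles of the known coefficients $b_{kj}$ of the~$f_k$ separate from the unknown coefficients $d_{ki}$ of the candidate solutions~$h_k$.
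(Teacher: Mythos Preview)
Your proposal is correct and follows essentially the same approach as the paper: expand $\sum_k f_k h_k$ in the generators~$g_\ell$ via the structure constants, then use that vanishing in~$R$ amounts to the coefficient vector lying in $\Syz_\Z(\mathcal{G})=\text{row space of }A$, which is precisely the existence of the auxiliary integers~$w_j$ in system~(ii). Your write-up is in fact more explicit about the index bookkeeping than the paper's own proof.
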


\begin{proof}
The tuple $(h_1,\dots,h_p)$ is a solution of~(i) if and only if
$$
\ssum_{k=1}^p \ssum_{i,j=0}^n b_{ki}d_{kj}g_ig_j=0.
$$
This is the case if and only if there exist $\alpha_{1},\dots,\alpha_{m}\in\Z$ such
that the left hand side is equal to $\sum_{j=1}^m \sum_{i=1}^n \alpha_j a_{ij}g_i$. 
The claim then follows by rewriting the products~$g_ig_j$ using the structure constants of~$R$ 
and applying the canonical isomorphism $R \cong \Z^{n+1}/\Syz(G)$.
\end{proof}

Let us illustrate this proposition with an example.

\begin{exa}
Let $R$ be the finite $\mathbb{Z}$-algebra generated by $\mathcal{G} = \{g_1, g_2, g_3\}$, where
$\Syz(\mathcal{G}) = \langle (3,0,0), (-1,0,4) \rangle$, and where
the multiplication in~$R$ is commutative and given by
$g_1^2 = 3g_1$, $g_1g_3 = 2g_2$, $g_2^2 = g_1+g_2$, and $g_ig_j = 0$ for all other combinations.
Consider the homogeneous linear equation
$$
  (2g_3)\, x_1 + (g_1+g_3)\, x_2 + (2g_1)\,x_3 \;=\; 0
$$
over~$R$.
Every solution is of the form $(h_1, h_2, h_3) \in R^3$ with $h_i = d_{i1} g_1 + d_{i2} g_2 + d_{i3} g_3$,
where $d_{ij}\in \mathbb{Z}$.
To compute generators of the solution space $\mathcal{L} \subseteq R^3$, we follow Proposition~\ref{prop:linear_sys}
and substitute~$x_i$ by~$h_i$. Using the structure constants, we then replace products $g_i g_j$ by
$\mathbb{Z}$-linear combinations of the generators and obtain the system of equations
\begin{align*}
    4 d_{11} g_2 = 0 \\
    3 d_{21} g_1 + 2 d_{23} g_2 + 2 d_{21} g_2 = 0 \\
    6 d_{31} g_1 + 4 d_{33} g_2 = 0.
\end{align*}
By substituting $g_i$ with $e_i$ and taking into account the generators of $\Syz(\mathcal{G})$, we obtain
a system of linear equations over $\mathbb{Z}$ which is given by the following matrix.
\setcounter{MaxMatrixCols}{15}
$$
\begin{pmatrix}
     0 & 0 & 0 & 3 & 0 & 0 & 6 & 0 & 0 & 3 & -1 \\
     4 & 0 & 0 & 2 & 0 & 2 & 0 & 0 & 4 & 0 & 0 \\
     0 & 0 & 0 & 0 & 0 & 0 & 0 & 0 & 0 & 0 & 4
\end{pmatrix}
$$
After projecting onto the first nine components of its solution space we obtain for example the
tuple $(0,0,0,1,0,1,0,0,-1) \in \mathbb{Z}^9$ which corresponds to the tuple $(0,g_1+g_2,-g_3) \in R^3$. 
The whole solution space in~$R^3$ is generated by this tuple together with seven further tuples.
\end{exa}

Proposition~\ref{prop:linear_sys} yields the following complexity bound for solving linear equations.

\begin{cor}
Let $R$ be an explicitly given finite $\Z$-algebra. Generators of the solution space of a linear
equation over $R$ as in Proposition~\ref{prop:linear_sys} can be computed in polynomial time in the bit complexity
of the input which is given by $\gamma$ (for~$R$) and by $p(n+1) \log_2(M)$ where $M = \max\{b_{kj}\}$ 
(for the elements $f_1,\dots,f_p)$.
\end{cor}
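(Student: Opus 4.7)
The plan is to invoke Proposition~\ref{prop:linear_sys} to reduce the problem to a homogeneous linear system over~$\Z$, bound the size and entry magnitude of that system in terms of the input parameters, and then apply the polynomial-time bound for integer linear algebra recalled in Remark~\ref{remark:computs_over_Z}(b). Finally, the projection $\mathcal{L}$ onto the $z$-coordinates is itself obtainable in polynomial time, since it amounts to discarding the $w$-components of the generators of the solution space.

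First I would count indeterminates and equations in system~(ii). The unknowns $z_{ki}$ contribute $p(n+1)$ coordinates and the unknowns $w_j$ contribute~$m$; the vector equation ``$\dots = 0$'' in $\Z^{n+1}$ is equivalent to $n+1$ scalar equations. Thus (ii) is a homogeneous linear system over~$\Z$ consisting of $n+1$ equations in $p(n+1)+m$ unknowns, whose rank is trivially bounded by $n+1$. All of these parameters are polynomial in $\gamma$ and $p(n+1)\log_2(M)$.

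Next I would bound the absolute values of the entries of the coefficient matrix of~(ii). For a fixed~$\ell$, the coefficient of~$z_{ki}$ is $\sum_{j=0}^n b_{kj}c_{ij\ell}$, which is a sum of $n+1$ products of structure constants (each bounded by~$M$ in absolute value, as part of~$\gamma$) with the coefficients~$b_{kj}$ (each bounded by the quantity~$M'=\max\{|b_{kj}|\}$ appearing in the second input bound). Hence each such entry has absolute value at most $(n+1)MM'$. The coefficient of~$w_j$ in the $e_i$-component is simply~$-a_{ij}$, bounded by~$M$. Consequently the logarithm of the maximal entry of the system is $O(\log_2(n) + \log_2(M) + \log_2(M'))$, which is polynomial in the input bit complexity.

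With the number of unknowns, the number of equations, the rank, and the logarithm of the largest absolute value of a coefficient all bounded polynomially in $\gamma$ and $p(n+1)\log_2(M)$, the complexity bound of Remark~\ref{remark:computs_over_Z}(b) (concretely, Storjohann's bound in~\cite{Sj2}, Theorem~19) yields a set of generators of the solution space of~(ii) in polynomial time. Projecting these generators onto the $z$-coordinates and reading off the tuples $(h_1,\dots,h_p)\in R^p$ via $h_k = \sum_{i=0}^n d_{ki}g_i$ is then a linear-time bookkeeping step. The only subtlety to verify, and the step most worth double-checking, is that the entry bound $(n+1)MM'$ (as opposed to a possibly much larger product of many structure constants) really suffices, because system~(ii) is built directly from the products $g_ig_j = \sum_\ell c_{ij\ell}g_\ell$ rather than from iterated multiplications; this is why the reduction in Proposition~\ref{prop:linear_sys} is essential to obtain polynomial complexity.
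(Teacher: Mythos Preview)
Your argument is correct and follows exactly the route taken in the paper: reduce via Proposition~\ref{prop:linear_sys} to the integer system~(ii) and invoke Remark~\ref{remark:computs_over_Z}(b). In fact you are more careful than the paper's two-line proof, which simply asserts that ``the coefficients in~(ii) are $b_{kj}$, $c_{ij\ell}$ and $a_{ij}$''; your observation that the actual coefficient of $z_{ki}$ in the $\ell$-th scalar equation is the sum $\sum_j b_{kj}c_{ij\ell}$, and hence bounded by $(n+1)MM'$, is the correct reading of~(ii) and fills in a step the paper leaves implicit.
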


\begin{proof}
The coefficients in the system of equations (ii) in Proposition~\ref{prop:linear_sys} are $b_{kj}$, $c_{ij\ell}$
and $a_{ij}$. The claim therefore follows immediately from Remark~\ref{remark:computs_over_Z}.b.
\end{proof}

The next proposition collects elementary operations in an explicitly given finite $\Z$-algebra 
which can be performed in polynomial time.

\begin{prop}{\bf (Elementary Ideal-Theoretic Operations)}\label{prop:elementary_computations}\\
Let $R$ be an explicitly given finite $\Z$-algebra, and let
$J = \langle f_1, \dots, f_k \rangle$ as well as $J' = \langle h_1, \dots, h_\ell \rangle$
be ideals in~$R$. We assume that the elements $f_i, h_j\in R$ are given as elements 
in $\Z[g_0,\dots,g_n]$ and that the bit complexity of these sets of polynomials is given 
by~$\delta_J$ and~$\delta_{J'}$, respectively.
\begin{enumerate}
\item[(a)] The rank and the invariant factors of $R^+$ can be computed in polynomial time in~$\beta$.

\item[(b)] Let $\overline{\mathcal{G}} = \{ \overline{g}_0, \dots, \overline{g}_n\}$ be the set of 
residue classes in $R/J$ of the elements of~$\mathcal{G}$. Then generators of $\Syz_{\Z}(\overline{\mathcal{G}})$ 
can be computed in polynomial time in~$\gamma+\delta_J$.

\item[(c)] We can decide whether $J \subseteq J'$ in polynomial time in $\gamma+\delta_J+\delta_{J'}$.

\item[(d)] We can decide whether $J = \langle 1 \rangle$ in polynomial time in $\gamma+\delta_J$.

\item[(e)] Generators of the intersection $J \cap J'$ can be computed in polynomial time
in $\gamma+\delta_J+\delta_{J'}$.
\end{enumerate}
\end{prop}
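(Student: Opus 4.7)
The overall strategy is to reduce each of (a)--(e) to either a Smith/Hermite normal form computation over $\Z$, a linear system over $\Z$, or to Proposition~\ref{prop:linear_sys}. Since each of these ingredients runs in polynomial time in the bit complexity of its input by Remark~\ref{remark:computs_over_Z}, the only thing to monitor is that the size of the data that we feed into those black boxes is polynomially bounded in $\gamma + \delta_J + \delta_{J'}$. Throughout I identify an element $r = \sum_{i=0}^n c_i g_i$ of $R$ with its coefficient vector $(c_0,\dots,c_n)\in\Z^{n+1}$ modulo the $\Z$-submodule $U = \Syz_\Z(\mathcal{G}) \subseteq \Z^{n+1}$ generated by the rows of $A$.

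For (a), the rank and invariant factors of $R^+$ are read off directly from the Smith normal form of $A$, which by Remark~\ref{remark:computs_over_Z}.a is computable in polynomial time in $\beta$. For (b), I first express every product $g_k f_i$ ($k=0,\dots,n$, $i=1,\dots,k$) as an element of $\Z[g_0,\dots,g_n]$ using the structure constants $c_{ijk}$ and the expansions of $f_i$; this produces an explicit finite set of $\Z$-module generators $V \subseteq \Z^{n+1}$ of the ideal $J$, whose coefficients are polynomial in $\gamma + \delta_J$. Then $\Syz_\Z(\overline{\mathcal{G}})$ consists of all $(d_0,\dots,d_n)\in\Z^{n+1}$ such that $\sum d_i g_i$ lies in $J$ modulo the relations of $\mathcal{G}$, i.e., all vectors of the form $v + u$ with $v\in\langle V\rangle_\Z$ and $u\in U$. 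Generators of $\langle V\rangle_\Z + U$ are obtained by a Hermite normal form computation on the matrix whose rows are $V$ together with $A$, which is polynomial time by Remark~\ref{remark:computs_over_Z}.a.

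For (c), containment $J \subseteq J'$ is equivalent to $f_i \in J'$ for $i=1,\dots,k$, and membership $f \in J'$ is in turn equivalent to the coefficient vector of $f$ lying in the $\Z$-submodule $\langle V'\rangle_\Z + U$ of $\Z^{n+1}$, where $V'$ is the set of coefficient vectors of the products $g_k h_j$. This is a membership test in a finitely generated $\Z$-submodule of $\Z^{n+1}$ and can be decided by a Hermite normal form computation, hence lies in P in $\gamma+\delta_J+\delta_{J'}$. Part (d) is the special case of (c) obtained by taking $f = 1 = g_0$ and asking whether $1 \in J$. For (e), I apply Proposition~\ref{prop:linear_sys} to the homogeneous equation
$$
f_1 y_1 + \cdots + f_k y_k - h_1 z_1 - \cdots - h_\ell z_\ell \;=\; 0
$$
over~$R$. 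The projection of a generating set of the solution module onto the $y$-coordinates, multiplied by $(f_1,\dots,f_k)$, yields a generating set of $J\cap J'$; by Proposition~\ref{prop:linear_sys} and its corollary this runs in polynomial time in $\gamma+\delta_J+\delta_{J'}$.

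The only real obstacle is bookkeeping: one has to verify that translating the ideal data through the structure constants and through the Smith/Hermite normal form machinery does not blow up the bit sizes beyond polynomial, and that the intermediate matrices produced in (b) and (e) have entries bounded polynomially in $M$ and the norms of the input coefficients. This is essentially automatic, since each coefficient of a product $g_k f_i$ is a sum of at most $(n+1)^2$ products of a structure constant with a coefficient of $f_i$, so the bit length grows at most by an additive $O(\log(n+1)) + \log_2 M$, which is absorbed into $\gamma+\delta_J+\delta_{J'}$.
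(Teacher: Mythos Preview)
Your proposal is correct and follows essentially the same approach as the paper: each item is reduced to Smith/Hermite normal form computations or to Proposition~\ref{prop:linear_sys}, with the same bookkeeping on bit sizes. The only noteworthy differences are cosmetic. In~(b) you explicitly take all products $g_k f_i$ to obtain $\Z$-module generators of~$J$, whereas the paper just writes down the coefficient vectors of the~$f_i$; your version is the more careful one. In~(d) you reduce to the membership test from~(c) (checking $1\in J$), while the paper instead applies~(b) and~(a) to compute the rank and invariant factors of~$R/J$ and checks that the rank is zero and all invariant factors equal~$1$; both arguments are one line. In~(e) your homogeneous equation $\sum f_i y_i - \sum h_j z_j = 0$ and the paper's syzygy matrix with an extra column of~$1$'s are two standard encodings of the same computation.
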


\begin{proof}
To prove (a), let $A = (a_{ij}) \in \Mat_{n+1, m}(\Z)$ be the matrix whose rows are given by the
generators of~$\Syz_{\Z}(\mathcal{G})$. The rank and the invariant factors of~$R^+$ can be determined from the Smith
normal form of~$A$, which can be computed in polynomial time by Remark~\ref{remark:computs_over_Z}.

Next we show (b). Using the structure constants, we can rewrite the $f_i$ as linear combinations
$b_{i0}g_0+ \cdots + b_{in} g_n$ of the generators of~$R^+$. This means that we obtain integer tuples
$v_1, \dots, v_r \in \Z^{n+1}$ with $v_i = (b_{i0}, \dots, b_{in})$ such that
$v_1, \dots, v_r$ together with the generators of $\Syz_{\Z}(\mathcal{G})$ generate
$\Syz_{\Z}(\overline{\mathcal{G}})$.

Using~(b), we can compute presentations $R/J \cong \Z^{n+1}/V_1$ and
$R/J' \cong \Z^{n+1}/V_2$, where~$V_1$ and~$V_2$ are submodules of $\Z^{n+1}$, in polynomial time.
The ideal~$J$ is then contained in~$J'$ if and only if $V_1 \subseteq V_2$. This proves~(c).

To show~(d), we use part~(b) to compute a presentation
$R/J \cong \Z^{n+1}/\Syz_{\Z}(\overline{\mathcal{G}})$. We can then apply~(a) and compute the rank
and the invariant factors of $R/J$ in polynomial time. Notice that we have $J = \langle 1 \rangle$ 
if and only if the rank of $R/J$ is zero and all invariant factors are equal to one.

For the proof of~(e), we let 
$$
\mathcal{M} =  \begin{pmatrix}
        1 & f_1 & \cdots & f_k & 0   & \cdots & 0\\
        1 & 0   & \cdots & 0      & h_1 & \cdots & h_\ell
\end{pmatrix}.
$$
Generators of $\Syz_R(M)$ can be computed in polynomial time by solving an appropriate linear system of equations
over~$R$ using Proposition~\ref{prop:linear_sys}. The first non-zero coordinates of the generators then generate
$J \cap J'$ by \cite{KR1}, Proposition~3.2.3.
\end{proof}

The following algorithm will come in handy when we compute the primitive idempotents of a finite $\Z$-algebra.

\begin{algorithm}{\bf (The Chinese Remainder Preimage Algorithm)}\label{prop:preimage}\\
Let $R$ be an explicitly given finite $\Z$-algebra. In particular, we assume that $R^+$ is generated by
$\mathcal{G} = \{g_0, \dots, g_n\}$ with $g_0 =1$. Let $J_1, \dots, J_s$ be pairwise comaximal ideals
in~$R$, and assume that $J_1 \cap \cdots \cap J_s = \langle 0 \rangle$. 
Given $i\in \{1,\dots,s\}$, consider the following sequence of instructions.
\begin{enumerate}
\item[(1)] Using Proposition~\ref{prop:elementary_computations}.b, compute $\Z$-submodules
$V_j \subseteq \Z^{n+1}$ such that we have $R/J_j \;\cong\; \Z^{n+1}/V_j$ for $j=1,\dots,s$.

\item[(2)] Compute a $\Z$-module basis $\{v_1, \dots, v_k \} \subseteq \Z^{n+1}$
of~$\bigcap_{j\neq i} V_j$.

\item[(3)] Let $\{w_1, \dots, w_\ell\}\subseteq \Z^{n+1}$ be a $\Z$-basis of~$V_i$. Compute a solution
$(c_i) \in \Z^{k+\ell}$ of the linear system of equations in the indeterminates
$y_1, \dots, y_{k+\ell}$ given by
$$
v_1 y_1 + \cdots + v_k y_k = w_1 y_{k+1}+ \cdots + w_\ell y_{k+\ell} + (1,0, \dots, 0).
$$

\item[(4)] Let $h = (h_0,\dots,h_n) = c_1 v_1 + \cdots + c_kv_k \in \Z^{n+1}$. Return the
element $f = h_0 g_0+\cdots + h_n g_n$ of~$R$ and stop.
\end{enumerate}
This is a polynomial time algorithm which computes an element $f \in R$ such that~$f$ is mapped to
the $i$-th canonical basis vector~$e_i$ under the canonical $\Z$-linear map
$$
\varphi:\; R \;\longrightarrow\;   R/J_1 \times \cdots \times R/J_s.
$$
\end{algorithm}

\begin{proof}
The tuple $h$ satisfies $h \in \bigcap_{j\neq i} V_j$ and $h -(1,0, \dots, 0) \in V_i$. This shows
that the residue class of $h$ in $\Z^{n+1}/\Syz_{\Z}(\mathcal{G})$ is mapped to $e_i$ under the
canonical map
$\psi:\; \Z^{n+1}/\Syz_{\Z}(\mathcal{G}) \longrightarrow \Z^{n+1}/V_1 \times \cdots \times
\Z^{n+1}/V_s$.
Hence~$f$ is mapped to~$e_i$ under the map~$\varphi$. Steps~(1) and~(2) of the algorithm can be performed in
polynomial time by Proposition~\ref{prop:elementary_computations}. The linear system in Step~(3) can also be solved
in polynomial time by Remark~\ref{remark:computs_over_Z}.
\end{proof}

Let us apply this algorithm to a concrete case.

\begin{exa}
Consider the finite $\mathbb{Z}$-algebra $R = \mathbb{Z}[x,y]/\langle x^3 + x^2, 3x^2 + 3x, xy + y, y^2, 2y \rangle$.
It is generated as a $\mathbb{Z}$-module by the elements of $\mathcal{G} = (1, \bar{x}^2, \bar{x}, \bar{y})$, and
$\Syz_{\mathbb{Z}}(\mathcal{G})$ is generated by $(0,0,0,2), (0,3,3,0)$. Consider the ideals
$J_1 = \langle \bar{y}^2, \bar{x}+1, 2\bar{y} \rangle$ and $J_2 = \langle \bar{x}^2, 3\bar{x}, \bar{y} \rangle$
in~$R$. Our goal is to compute $f \in R$ such that $f$ is mapped to~$e_2$ under the canonical $\mathbb{Z}$-linear
map $R \rightarrow R/J_1 \times R/J_2$.

\begin{enumerate}
\item[(1)] Using Proposition~\ref{prop:elementary_computations}.b, we find
$V_1 = \langle (1,0,1,0), (0,0,0,2), (-1,1,0,0) \rangle$ and $V_2 = \langle (0,0,3,0), (0,1,0,0), (0,0,0,1) \rangle$
such that $R/J_1 \cong \mathbb{Z}^4/V_1$ and $R/J_2 \cong \mathbb{Z}^4/V_2$.

\item[(2)] We have $\bigcap_{j\neq 2} V_j = V_1$.

\item[(3)] A solution of the linear system
$$
\begin{pmatrix}
   1 & 0 & -1 & 0 & 0 & 0 & -1 \\
   0 & 0 & 1 & 0 & -1 & 0 & 0 \\
   1 & 0 & 0 & -3 & 0 & 0 & 0 \\
   0 & 2 & 0 & 0 & 0 & -1 & 0
\end{pmatrix}
\cdot
\begin{pmatrix} y_1 \\ \vdots \\ y_7 \end{pmatrix}
= \begin{pmatrix} 1  \\ 0  \\ 0  \\ 0 \end{pmatrix}
$$
is given by $(3,0,2,1,2,0) \in \mathbb{Z}^6$.

\item[(4)] This solution yields the tuple $h = (1,2,3,0)$ which corresponds to the element
  $f = 1+3\bar{x}+2\bar{x}^2$ in $R$. It is the preimage of $e_2$ under the canonical
  map $R \rightarrow R/J_1 \times R/J_2$.

\end{enumerate}
\end{exa}

\bigskip\bigbreak
%
%

\section{Computing the Maximal Ideals of a 0-Dimensional $K$-Algebra}
\label{sec3}

In this section we assume that $K$ is either the field of rational numbers $\Q$ or a finite field~$\F_p$. 
Our goal is to study the complexity of computing the maximal components of a 0-dimensional 
$K$-algebra~$R$ which is explicitly given in the following sense.

\begin{defi}
A 0-dimensional $K$-algebra $R$ is \textbf{explicitly given} if it is given by a $K$-vector space
basis $\mathcal{B} = \{b_1, \dots, b_n\}$ and structure constants $c_{ijk}$ such that
$b_ib_j = \sum_{k=1}^n c_{ijk} b_k$ for all $i,j=1, \dots, n$.
\end{defi}

Note that a 0-dimensional $K$-algebra as in this definition can equivalently be given by a basis together with
multiplication matrices. The crucial step in computing the maximal ideals of~$R$ is the factorization of
univariate polynomials over~$K$.

\begin{rem}\label{remark:factoring}
In 1982 Lenstra et al.~\cite{LLL} published a deterministic algorithm for factoring univariate polynomials in
$\Q[x]$. The running time of their algorithm is polynomial in $\deg(f)$ and $\log(\lvert f\rvert )$, where for a
polynomial $f = \sum_i a_i x^i \in \Q[x]$ we define $\lvert f\rvert = \sqrt{\sum_i a_i^2}$. This means it requires
only a polynomial number of bit operations measured in the input size.

For univariate polynomials over finite fields, the situation is slightly more complicated. A deterministic
algorithm for factoring polynomials over finite fields was presented by Berlekamp in~\cite{Be1}. Its running time
for factoring a polynomial $f \in \F_p$ is polynomial in $p$ and $\deg(f)$. But this is not polynomial
in the bit complexity of the input which is given by $(1+\deg(f)) \log_2(p)$. In 1970 Berlekamp published
a Las Vegas algorithm~\cite{Be2} for the problem which has polynomial running time in the input. Since then many
new and faster algorithms were developed, see e.g.~\cite{vzG}.
But it is still unknown whether the factorization can be performed in deterministic polynomial time. It was shown by
Evdokimov \cite{Ev} that under the generalized Riemann hypothesis (GRH) the problem can be solved in subexponential time. 
Furthermore, there have been efforts to drop the GHR assumption (see~\cite{IKRS}).
In addition, there exist deterministic polynomial time algorithms~\cite{Ga, Sa} for many special classes of
polynomials over finite fields. Indeed, it is conjectured that the set of polynomials which do not satisfy any of
the conditions in~\cite{Sa} is empty.
\end{rem}

The first step in computing the maximal ideals of~$R$ is to compute its nilradical.

\begin{algorithm}{\bf (Computing the Nilradical of a 0-Dimensional Algebra)}\label{alg:radical}\\
Let $R$ be an explicitly given 0-dimensional $K$-algebra. Consider the following sequence of instructions.
\begin{enumerate}
\item[(1)] Let $J=\langle 0 \rangle$ and $\mathcal{B} = \{b_1, \dots, b_n\}$.

\item[(2)] For $i=1,\dots,n$, perform the following steps (3)-(7).

\item[(3)] Compute the minimal polynomial $\mu_{b_i+J}(z)$ of $b_i +J$ in $R/J$.

\item[(4)] Calculate $g_i(z) = \sqfree(\mu_{b_i+J}(z))$.

\item[(5)] Replace $J$ with $J + \langle g_i(b_i) \rangle$.

\item[(6)] Using the structure constants, rewrite $g_i(b_i)$ and try to obtain a representation of some
    $b_j \in \mathcal{B}$ as a linear combination of the remaining elements. If such linear combinations exist remove
    those elements $b_j$ from $\mathcal{B}$ and update the structure constants to obtain
    an explicit presentation of~$R/J$.

\item[(7)] If $\deg(g_i(z)) = \dim_K(R/J)$, return the ideal~$J$ together with the explicit presentation of~$R/J$
and stop.

\item[(8)] Return the ideal~$J$ together with the explicit presentation of~$R/J$ and stop.
\end{enumerate}
This is an algorithm which computes the nilradical $\Rad(0)$ of~$R$ together with an explicit presentation of $R/\Rad(0)$.
If $K = \Q$, or if $K$ is a finite prime field, then it has polynomial running time. In particular, the bit
complexity of the explicit representation of $R/\Rad(0)$ is polynomially bounded by the bit complexity of the input.
\end{algorithm}

\begin{proof}
The correctness of this algorithm is shown in \cite{KR3}~Algorithm~5.4.2. It remains to prove that it runs in
polynomial time. The minimal polynomial in step~(3) can be computed using \cite{KR3}, Algorithm~1.1.8. It involves
finding linear dependencies among the elements $1+J$, $b_i+J$, $\dots$, $b_i^d+J$ where $d = \dim_K(R/J)$. 
Using the structure constants, we rewrite $b_i^j$ for $j=2,\dots,d$ as linear combinations of the elements of~$\mathcal{B}$.
The linear dependencies can then clearly be found in polynomial time. The squarefree part of $g_i(z)$ in step~(4)
can also be computed in polynomial time (see~\cite{vzGG} Section~14.6).

The bit complexity of the presentation of $R/\Rad(0)$ is polynomially bounded by the bit complexity of the input, 
since during each iteration the bit complexity of the structure constants obtained in step~(6) is polynomially bounded 
by the bit complexity of the structure constants of the previous iteration.
\end{proof}

The following example illustrates how this algorithm can be applied.

\begin{exa}
Consider the zero-dimensional $\mathbb{Q}$-algebra $R$ with basis $\{1,b_1,b_2,b_3\}$ and multiplication given by
$b_1^2 = 2b_1-1$, $b_1b_2 = b_3$, $b_1b_3 = 2b_3-b_1$, $b_2^2 = -b_2-1$, $b_2b_3= -b_3-b_1$ and
$b_3^2 = -2b_3+b_2-2b_1+1$.
\begin{enumerate}
\item[(1)] We let $J = \langle 0 \rangle$ and $\mathcal{B} = \{b_1, b_2, b_3,1\}$.

\item[(3)] The minimal polynomial $\mu_{b_1}(z)$ of $b_1$ is $z^2-2z+1$.

\item[(4)] We calculate $g_1(z) = \sqfree(\mu_{b_1}(z)) = z-1$.

\item[(5)] We set $J = \langle b_1-1 \rangle$.

\item[(6)] Substituting $b_1 = 1$ into $b_1b_2 = b_3$ yields $b_2 = b_3$. Therefore we set
$\mathcal{B} = \{b_2,1\}$ and obtain $\dim_{\mathbb{Q}}(R/J) = 2$.

\item[(3)] The minimal polynomial $\mu_{b_2+J}(z)$ of $b_2+J$ is $z^2+z+1$.

\item[(4)] We calculate $g_2(z) = \sqfree(\mu_{b_2}(z)) = \mu_{b_2}(z)$.

\item[(5)] We set $J = \langle b_1-1, b_2^2+b_2+1\rangle$.

\item[(6)] Rewriting $b_2^2+b_2+1$ using $b_2^2 = -b_2-1$, we only obtain the trivial relation given
by $0=0$. Thus $\mathcal{B}$ is not updated and $\dim_{\mathbb{Q}}(R/J) = 2$.

\item[(7)] Since $\dim_{\mathbb{Q}}(R/J) = 2 = \deg(g_2)$ we return the ideal
$J = \langle b_1-1, b_2^2+b_2+1\rangle$ together with the $\mathbb{Q}$-basis $\{1, \bar{b}_2\}$ of $R/J$
and the structure constant $b_2^2 = -b_2-1$.

\end{enumerate}
\end{exa}

Having computed the nilradical of~$R$, we can then obtain its maximal ideals as follows.
In the case $K = \Q$, we can use Algorithm 7.2 in~\cite{LS}. It has polynomial running time in the bit
complexity of the input. For $K = \F_p$, we can only hope for an algorithm in ZPP since we need to factor
univariate polynomials over~$\F_p$. In the more general case of associative algebras over finite fields, the
complexity of computing their structure, i.e., their simple components was studied in~\cite{FR}, \cite{Ro},
and~\cite{EG}. But let us take advantage of the fact that we are in the commutative case and analyze
the complexity of the algorithm presented in~\cite{KR3}, which was inspired by~\cite{GWW}. In contrast to the methods
cited above it has the advantage of being well-suited for an actual implementation.

\begin{defi}
Let $R$ be a 0-dimensional $\F_p$-algebra.
\begin{enumerate}
\item[(a)] The map $\phi_p :\; R \longrightarrow R$ defined by $a \mapsto a^p$ is an $\F_p$-linear ring
endomorphism of~$R$. It is called the \textbf{Frobenius endomorphism} of~$R$.

\item[(b)] The $\F_p$-vector subspace
$$
\Frob_p(R) = \{ f \in R \mid f^p - f = 0 \}
$$
of~$R$, i.e., the fixed-point space of~$R$ with respect to~$\phi_p$, is called the \textbf{Frobenius space} of~$R$.

\end{enumerate}
\end{defi}

In \cite{KR3}, Algorithm~5.2.7, it is explained how one can calculate the Frobenius space
of a 0-dimensional $\F_p$-algebra. Based on this result, we obtain the following algorithm.

\begin{algorithm}{\bf (Primary Decomposition in Characteristic $p$)}\label{alg:primary_decomp}\\
Let $R$ be an explicitly given 0-dimensional $\F_p$-algebra. In particular, we assume
that $\mathcal{B} = \{b_1, \dots, b_n\}$ is a $K$-vector space basis of~$R$. Consider the following sequence of
instructions.
\begin{enumerate}
\item[(1)] Form the multiplication matrix $M_{\mathcal{B}}(\phi_p)$ of the Frobenius endomorphism of $R$, and compute
the number $s = n -\rank(M_{\mathcal{B}}(\phi_p)-I_n)$ of primary components of the zero ideal of~$R$.
If $s=1$ then return $\langle 0 \rangle$ and stop.

\item[(2)] Let $L$ be the list consisting of the pair $(\langle 0 \rangle,s)$. Repeat steps~(3)--(6) until the
second component of all pairs in~$L$ is~1. Then return the tuple consisting of all first components of the
pairs in~$L$ and stop.

\item[(3)] Choose the first pair $(J,t)$ in $L$ for which $t>1$ and remove it from $L$.

\item[(4)] Using Algorithm~5.2.7 in \cite{KR3}, compute the Frobenius space of $R/J$. Choose a non-constant
element~$f$ in it.

\item[(5)] Calculate the minimal polynomial of the element $f$ and factor it in the form
$\mu_{f}(z) = (z-a_1) \cdots (z-a_u)$ with $a_1, \dots, a_u \in \F_p$.

\item[(6)] For $i=1, \dots, u$, let $J_i = J + \langle f-a_j \rangle$. Compute the dimension $d_i$ of
$\Frob_p(R/J_i)$ and append the pair $(J_i, d_i)$ to $L$.

\end{enumerate}
This is an algorithm which calculates the list of primary components of the zero ideal of~$R$. It is in ZPP.
\end{algorithm}

\begin{proof}
The correctness of this algorithm is shown in \cite{KR3}, Algorithm~5.2.11. In particular, it is proved there that
$t = d_1 + \cdots + d_u$ throughout the course of this algorithm. Therefore the number of iterations of steps (3)--(6) is
bounded by~$s$ which in turn is bounded by the vector space dimension~$n$ of~$R$. Algorithm~5.2.7 in step~(4) 
involves computing a basis for the kernel of a matrix over~$K$ and can therefore be done in polynomial time. As discussed in
the proof of Algorithm~\ref{alg:radical}, the minimal polynomial in step~(5) can also be computed in polynomial time. 
Computing its factorization is in ZPP by Remark~\ref{remark:factoring}.
\end{proof}

The following example shows this algorithm at work.

\begin{exa}
Consider the zero-dimensional $\mathbb{F}_2$-algebra~$R$ given by an $\mathbb{F}_2$-basis 
$B = \{1, b_1, b_2, b_3 \}$ and the multiplication $b_1^2 = b_1$, $b_1b_2 = b_3$, $b_1b_3 = b_3$, $b_2^2 = 1$, $b_2b_3 = b_1$, 
and $b_3^2 = b_1$.
\begin{enumerate}
\item[(1)] The structure constants provide for every $b \in B$ a representation of $b^2$ in terms of
the basis~$B$. This yields the matrix
$$
M_{\mathcal{B}}(\phi_2) =
\begin{pmatrix}
    1 & 0 & 1 & 0 \\
    0 & 1 & 0 & 1 \\
    0 & 0 & 0 & 0 \\
    0 & 0 & 0 & 0
\end{pmatrix},
$$
and we obtain $s = 4 -\rank(M_{\mathcal{B}}(\phi_2)-I_4) = 4-2 = 2$ for the number of primary components.

\item[(2)] Let $L = ((\langle 0 \rangle, 2))$.

\item[(3)] Choose the pair $(\langle 0 \rangle, 2)$, and let $L = ()$.

\item[(4)] The kernel of the matrix $M_{\mathcal{B}}(\phi_2)-I_4$ has the basis $\{(1,0,0,0),(0,1,0,0)\}$.
Therefore the Frobenius space of $R$ is given by $\langle 1, b_1 \rangle$. We choose $f = b_1$.

\item[(5)] The minimal polynomial of $f$ is given by $\mu_{f}(z) = z(z+1)$.

\item[(6)] Let $J_1 = \langle b_1 \rangle$ and $J_2 = \langle b_1+1 \rangle$. Using the structure constants
we see that the residue classes of $B' = \{1,b_2\}$ in $R/J_1$ and $R/J_2$ form a basis of the respective
algebras. In both cases we determine $s = 2 - \rank(M_{\bar{B}'}(\phi_2)-I_2) = 2-1 = 1$. Hence we
set $L = (J_1, 1), (J_2, 1))$.

\item[(2)] Since the second component of both pairs in $L$ is 1, we return the primary components
$\langle b_1 \rangle$ and $\langle b_1+1 \rangle$ of the zero ideal of $R$.

\end{enumerate}
\end{exa}

Using this algorithm, we can now calculate the maximal ideals of explicitly given 0-dimensional algebras.

\begin{cor}{\bf (Complexity of Computing the Maximal Ideals)}\label{cor:complexityofmax}\\
Let $K$ be the field of rational numbers or a finite prime field, and let~$R$ 
be an explicitly given 0-dimensional $K$-algebra.
\begin{enumerate}
\item[(a)] If $K = \Q$, then the maximal ideals of~$R$ can be computed in polynomial time.

\item[(b)] If $K = \mathbb{F}_p$, then the maximal ideals of~$R$ can be computed in ZPP.
\end{enumerate}
\end{cor}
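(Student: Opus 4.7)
The plan is to reduce to the reduced case and then invoke the primary decomposition machinery already established. Recall that for a $0$-dimensional $K$-algebra~$R$ the maximal ideals of~$R$ are in bijection with the maximal ideals of $R/\Rad(0)$ via the canonical projection, and since $R/\Rad(0)$ is reduced and $0$-dimensional, its zero ideal decomposes as the intersection of finitely many pairwise comaximal maximal ideals, which are precisely the primary components of~$\langle 0\rangle$ in $R/\Rad(0)$.

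The first step, common to both (a) and (b), is to apply Algorithm~\ref{alg:radical} to obtain an explicit presentation of $R/\Rad(0)$. By that algorithm, this presentation is produced in polynomial time for $K=\Q$ and in ZPP for $K=\F_p$, and crucially the bit complexity of its structure constants is polynomial in the bit complexity of the input.

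For part~(a), the second step is to apply Algorithm~7.2 of~\cite{LS} to the explicit presentation of $R/\Rad(0)$. That algorithm computes the maximal ideals of a reduced $0$-dimensional $\Q$-algebra in polynomial time in the bit complexity of its input, and its output gives generators of the maximal ideals which then lift trivially to generators of the maximal ideals of~$R$ via the projection $R\twoheadrightarrow R/\Rad(0)$. Composing the two polynomial-time stages yields a polynomial-time algorithm overall.

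For part~(b), the second step is to apply Algorithm~\ref{alg:primary_decomp} to the explicit presentation of $R/\Rad(0)$. Since $R/\Rad(0)$ is reduced, each primary component produced is in fact a maximal ideal, so the output is exactly the list of maximal ideals of $R/\Rad(0)$ and hence (after pulling back) of~$R$. As Algorithm~\ref{alg:primary_decomp} is in ZPP on its input and the presentation of $R/\Rad(0)$ has polynomially bounded bit size, the composed procedure runs in ZPP. The only conceivable difficulty lies in verifying that the intermediate representation of $R/\Rad(0)$ produced by Algorithm~\ref{alg:radical} satisfies the complexity hypothesis required by the subsequent algorithm, but this is precisely the polynomial-bit-size guarantee already stated at the end of Algorithm~\ref{alg:radical}, so no additional work is needed.
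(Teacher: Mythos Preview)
Your proof is correct and follows essentially the same route as the paper: compute $\Rad(0)$ and an explicit presentation of $R/\Rad(0)$ via Algorithm~\ref{alg:radical}, then apply Algorithm~7.2 of~\cite{LS} in the case $K=\Q$ and Algorithm~\ref{alg:primary_decomp} in the case $K=\F_p$. One small inaccuracy: Algorithm~\ref{alg:radical} runs in deterministic polynomial time for \emph{both} $K=\Q$ and $K=\F_p$ (it only needs squarefree parts, not factorizations), so your claim that it is merely in ZPP over $\F_p$ is weaker than what is actually available---but this does not affect the validity of the argument.
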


\begin{proof}
Using Algorithm~\ref{alg:radical}, we compute the nilradical~$\Rad(0)$ of~$R$ in polynomial time. This
algorithm also yields an explicit presentation of $R/\Rad(0)$. If $K = \Q$, we then apply Algorithm~7.2
from~\cite{LS} to~$R/\Rad(0)$ and obtain the maximal ideals of~$R$ in polynomial time. Similarly, in the case 
$K = \mathbb{F}_p$, we apply Algorithm~\ref{alg:primary_decomp} to $R/\Rad(0)$.
\end{proof}

For further details and more examples which illustrate the algorithms presented in this section, we refer to Chapter~5
in~\cite{KR3}.

\bigskip\bigbreak
%
%

\section{Computing the Associated Primes of Finite $\Z$-Algebras}
\label{sec4}

In this section we let~$R$ be a finite $\Z$-algebra. We show that the associated primes of~$R$  can be computed in~ZPPIF,
if~$R$ is explicitly given. Note that the associated primes of~$R$ are given by the primary decomposition of its
nilradical~$\Rad(0)$. Algorithms for computing the primary decomposition of ideals in $\Z[x_1, \dots, x_n]$
date back to 1978~\cite{Ayo, Se}. More recently, Pfister et al.~\cite{PSS} presented a slightly different approach.
Inspired by this algorithm, we gave an efficient algorithm in~\cite{KMW} for computing the primary decomposition of
ideals $I \subseteq \Z[x_1, \dots, x_n]$ such that $P/I$ is a finite $\Z$-algebra. Let us now apply
this approach to explicitly given finite $\Z$-algebras.

The following lemma is used to split the computation into computing the associated primes
of 0-dimensional ideals in $\Q[x_1, \dots, x_n]$ and $\F_p[x_1, \dots, x_n]$.

\begin{lem}\label{lemma:splitting}
Let $R=P/I$ be an explicitly given finite $\Z$-algebra and let $\tau$ be its torsion exponent.
\begin{enumerate}
\item[(a)] The ideal $(I : \langle \tau \rangle)/I$ is the torsion subgroup of~$R^+$.

\item[(b)] We have $I = (I : \langle \tau \rangle) \cap (I+ \langle \tau \rangle)$.

\item[(c)] If $R$ is finite, then $I \cap \Z = \langle \tau \rangle$.
\end{enumerate}
\end{lem}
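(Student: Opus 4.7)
The plan is to reduce all three claims to the defining property of $\tau$: namely that $\tau$ is the exponent of the torsion subgroup~$T$ of $R^+$, as introduced in Remark~\ref{remark:smith}.

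For part~(a), I would just unwind definitions. The torsion subgroup~$T$ of $R^+$ consists of those classes $f+I$ annihilated by \emph{some} nonzero integer, and since $T$ has exponent~$\tau$, this is equivalent to the condition $\tau(f+I)=0$, i.e.\ $\tau f \in I$. But that is exactly the membership condition for $f \in (I : \langle \tau \rangle)$, so $(I:\langle\tau\rangle)/I = T$.

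For part~(b), the inclusion $I \subseteq (I:\langle\tau\rangle)\cap(I+\langle\tau\rangle)$ is immediate. For the converse, I would take $f$ in the intersection, write $f = g + \tau h$ with $g\in I$ and $h\in P$, and multiply by~$\tau$ to obtain $\tau^2 h = \tau f - \tau g \in I$. Thus the class $h + I$ is torsion in~$R^+$, and part~(a) then forces $\tau h \in I$, so $f = g + \tau h \in I$. The only subtle point is here: the implication $\tau^2 h \in I \Rightarrow \tau h \in I$ is not a generic calculation but relies precisely on~(a), i.e.\ on the fact that $\tau$ is an exponent of the \emph{whole} torsion subgroup.

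For part~(c), one direction is easy: if $R$ is finite then $R^+=T$ has exponent~$\tau$, so in particular $\tau\cdot 1 = 0$ in $R$, placing $\tau$ inside $I \cap \Z$. For the reverse inclusion, the key observation is that in a unital ring one has $n\cdot r = (n\cdot 1)\,r$ for every $r \in R$, so the additive order of the identity $1+I$ equals the exponent of~$R^+$, namely~$\tau$. Consequently any $n \in I \cap \Z$, which by definition satisfies $n\cdot 1 = 0$ in $R$, must be a multiple of~$\tau$. I do not foresee any real obstacle; the argument is essentially abelian-group bookkeeping.
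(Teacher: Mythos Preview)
Your proof is correct and follows essentially the same approach as the paper's: part~(a) is unwound from the definition of~$\tau$, part~(c) uses that the additive order of~$1$ equals the exponent of~$R^+$, and your direct argument for part~(b) is exactly the proof of the ``standard lemma'' the paper invokes after observing that~(a) gives $I:\langle\tau\rangle = I:\langle\tau\rangle^\infty$. The only difference is presentational---you carry out the saturation argument explicitly rather than citing it.
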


\begin{proof}
Part (a) follows immediately from the definition of the exponent of the torsion subgroup of~$R^+$. It then implies
$I:\langle \tau \rangle = I: \langle \tau \rangle^\infty$, which means that claim~(b) is a standard lemma in commutative
algebra. To prove (c), we note that the ring $P/I$ is finite if and only if there exists a positive integer $k \in \Z$
with $I \cap \Z = \langle k \rangle$. If such a number~$k$ exists, we have $k \cdot f = 0$ for all
$f \in R$, and therefore $\tau \mid k$. But we also have $n \cdot 1 = 0$ in $R = P/I$, and hence $\tau \in I$.
This implies $k \mid \tau$, and thus $k=\tau$.
\end{proof}

The associated primes of~$R$ can now be computed as described in the following algorithm.

\begin{algorithm}{\bf(Computing the Associated Primes)}\label{alg:ass_primes}\\
Let $R = P/I$ be an explicitly given finite $\Z$-algebra. Consider the following sequence of instructions.
\begin{algorithmic}[1]
   \STATE Set $L := [\;]$.
   \STATE Compute the torsion exponent $\tau$ of $R^+$.
   \IF {the rank of $R$ is not zero}
       \STATE Compute the prime components $\bar{\mathfrak{p}}_1 \cap \dots \cap \bar{\mathfrak{p}}_\ell$ of
       $I\,\Q[x_1, \dots, x_n]$.
       \STATE Compute $\bar{\mathfrak{p}}_j \cap P$ and append these ideals to~$L$.
       \STATE Recursively apply the algorithm to $I + \langle \tau \rangle$ and obtain the set~$M$.
       \STATE Compute $J := \bigcap_{{\scriptstyle\mathfrak{p}}\, \in L}\mathfrak{p}$.
       \STATE Remove all ideals in~$M$ that contain~$J$.
       \RETURN $L \cup M$
    \ELSE
       \STATE Compute all prime factors $p_1, \dots, p_r$ of $\tau$.
       \STATE Set $M := [\;]$.
       \FOR{$i=1,\dots,r$}
          \STATE Compute the prime components $\bar{\mathfrak{p}}_1 \cap \dots \cap \bar{\mathfrak{p}}_m$ of
          $I\,\F_{p_i}[x_1, \dots, x_n]$.
          \STATE Compute the preimages~$\mathfrak{\mathfrak{p}}_j$ of~$\bar{\mathfrak{p}}_j$ in~$P$ and append
          them to~$M$.
      \ENDFOR
      \RETURN $M$
   \ENDIF
\end{algorithmic}
This is an algorithm which computes the associated primes $\mathfrak{p}_1, \dots, \mathfrak{p}_k$ of~$R$. It is in~ZPPIF.
\end{algorithm}

\begin{proof}
The correctness of the algorithm follows from Lemma~\ref{lemma:splitting} and Proposition~4.7 in~\cite{KMW}.
Let us analyze the complexity of each of its steps. The torsion exponent and the rank of~$R$ can be computed in
polynomial time in~$\beta$ using Proposition~\ref{prop:elementary_computations}.a, and the bit complexity of
the torsion exponent is polynomially bounded by Lemma~\ref{lemma:exponent_bound}. Since $P/I$ is a finite
$\Z$-algebra, the ideals $I \Q[x_1, \dots, x_n]$ and $I \F_p[x_1, \dots, x_n]$ are
0-dimensional and therefore define 0-dimensional $\Q$- and $\F_p$-algebras, respectively. Their vector space
dimension is less than or equal to the number of generators of~$R$, and their structure constants are given by the
structure constants of~$R$. Thus we obtain the maximal components in lines~(4) and (14) in polynomial and probabilistic
polynomial time, respectively, by applying the algorithms in Section~\ref{sec3}. The intersection of the prime 
ideals in line~(7) can be computed in polynomial time by Proposition~\ref{prop:elementary_computations}.e. Finally,
Proposition~\ref{prop:elementary_computations}.c allows us to check the containment of ideals in line~(8) in
polynomial time.

In summary, all steps except for the integer prime factorization in line~(11) are in~ZPP.
\end{proof}

Note that, since the exponent~$\tau$ of~$R$ is the largest invariant factor of~$R$, all other invariant factors of~$R$
are divisors of~$\tau$. This means that we might already have a partial factorization of~$\tau$.
Let us compute the associated primes in a concrete case.

\begin{exa}
Consider the finite $\mathbb{Z}$-algebra $R$ given by the explicit presentation $R= \mathbb{Z}[x,y,z]/I$ 
with $I = \left\langle 6z, 6y, x^2+x-6, z^2, y^2, xy-y, xz-y, yz \right\rangle$. 
We follow the above algorithm and compute the associated primes of~$R$.
\begin{enumerate}
\item[(2)] Using Proposition~\ref{prop:elementary_computations}.a, we find that the torsion exponent 
of~$R$ is~6. 

\item[(4)]Since the rank of~$R$ is~$2$, we then compute the minimal associated prime ideals 
of $I \mathbb{Q}[x,y,z]$ using \cite{KR3}, Alg.~5.4.2 and obtain $\bar{\mathfrak{p}}_1 = \langle z, y, x + 3 \rangle$ 
as well as $\bar{\mathfrak{p}}_2 = \langle z, y, x -2 \rangle$.

\item[(5)] Let $\mathfrak{p}_1 = \bar{\mathfrak{p}}_1 \cap P$ and $\mathfrak{p}_2 = \bar{\mathfrak{p}}_2 \cap P$.

\item[(6)] We apply the algorithm recursively to $I+\langle 6\rangle$.

\item[(11)] Here we calculate the prime factorization $6 = 2 \cdot 3$.

\item[(14)] We determine the minimal associated primes $\bar{\mathfrak{p}}_3 = \langle x+1,y,z \rangle$ and
$\bar{\mathfrak{p}}_4 = \langle x,y,z \rangle$ of $I \mathbb{F}_3[x,y,z]$ using Algorithm~\ref{alg:primary_decomp}.
    
\item[(15)] Their canonical liftings are $\mathfrak{p}_3 = \langle x+1,y,z ,3\rangle$ and 
$\mathfrak{p}_4 = \langle x,y,z ,3\rangle$.  

\item[(14)] We determine the minimal associated primes $\bar{\mathfrak{p}}_5 = \langle x+1,y,z \rangle$ and
$\bar{\mathfrak{p}}_6 = \langle x,y,z \rangle$ of $I \mathbb{F}_2[x,y,z]$ using Algorithm~\ref{alg:primary_decomp}.
    
\item[(15)] Their canonical liftings are $\mathfrak{p}_5 = \langle x+1,y,z ,2\rangle$ and 
$\mathfrak{p}_6 = \langle x,y,z ,2\rangle$.  

\item[(7)] We  compute $J = \mathfrak{p}_1 \cap \mathfrak{p}_2 = \langle z,y,x^2+x-6 \rangle$. 
    
\item[(8)] The ideal~$J$ is contained in $\mathfrak{p}_3$, $\mathfrak{p}_4$, $\mathfrak{p}_5$, and $\mathfrak{p}_6$.

\item[(9)]The minimal associated prime ideals of~$R$ are given 
by $\mathfrak{p}_1$ and $\mathfrak{p}_2$.
\end{enumerate}
\end{exa}

\bigskip\bigbreak
%
%

\section{Computing Primitive Idempotents}
\label{sec5}

In this section our goal is to compute the primitive idempotents of an explicitly given finite $\Z$-algebra $R$.
We describe a variant of the method presented in Section~4 of~\cite{KMW} and analyze its complexity.
We will use the fact that the idempotents modulo a nilpotent ideal can be lifted. The following algorithm is
based on Lemma~3.2.1 in~\cite{DK}.

\begin{algorithm}{\bf (Lifting Idempotents)}\label{alg:lifting}\\
Let $R$ be an explicitly given finite $\Z$-algebra, and let $\Rad(0) \subseteq R$ be its nilradical.
Let $e \in R$ be such that $e^2 \equiv e \mod \Rad(0)$. Consider the following instructions.
\begin{enumerate}
\item[(1)] Set $h = e$.

\item[(2)] Compute $f = h+r-2hr$ where $r = h^2-h$.

\item[(3)] Represent $f^2-f$ as a $\Z$-linear combination $f^2-f = c_0g_0 + \cdots + c_ng_n$ using the
            structure constants.

\item[(4)] If $(c_0, \dots, c_n) \in \Syz_{\Z}(\mathcal{G})$, return~$f$ and stop. Otherwise set $h = f$ and continue
with step~(2).
\end{enumerate}
This is an algorithm which computes an idempotent $f \in R$ such that $f \equiv e \mod \Rad(0)$.
Furthermore, if $e$ is a primitive idempotent modulo $\Rad(0)$, then $f$ is a primitive idempotent in~$R$.
\end{algorithm}

\begin{proof}
The algorithm terminates since $\Rad(0)$ is a nilpotent ideal.
To prove the correctness, we show that if $h$ is an idempotent modulo $\Rad(0)^{2^k}$, then $f$ is an idempotent
modulo $\Rad(0)^{2^{k+1}}$. By assumption, we have $h^2-h \in \Rad(0)^{2^k}$, and therefore
$h^2r-hr = (h^2-h)r = r^2 \in \Rad(0)^{2^{k+1}}$. Then we get
$$
f^2 \;\equiv\; h^2+2hr-4h^2r \;\equiv\; h+r+2hr-4hr \;\equiv\; f \mod \Rad(0)^{2^{k+1}}
$$
and $f \equiv h \mod \Rad(0)^{2^k}$.
Now assume that $e \mod \Rad(0)$ is a primitive idempotent and that $f = e'+e''$ can be written as the sum of two
orthogonal idempotents. Then we have $e' \in \Rad(0)$ or $e'' \in \Rad(0)$, since $e$ is primitive. But $\Rad(0)$
consists only of nilpotent elements. Therefore $e'$ or $e''$ has to be zero.
\end{proof}

Let us apply this algorithm to an example.

\begin{exa}
Consider the finite $\mathbb{Z}$-algebra $R$ generated by $\mathcal{G} = \{1, g_1, \dots, g_5\}$ with relation ideal
$\langle 6,\, 3g_1,\, 3g_2,\, 3g_3\rangle$. The non-trivial structure constants
are given by $g_4^2 = g_5$, $g_5^2 = g_4$ and $g_4g_5 = 1$. We have
$\Rad(0) = \langle 6, 2g_5-2, g_2+2, g_2-g_3 \rangle$, and the residue class 
of $e = g_4+g_5+1$ in $R/\Rad(0)$ is a
primitive idempotent. We apply Algorithm~\ref{alg:lifting} to lift $e$ to an idempotent of~$R$.
\begin{enumerate}
\item[(1)] We set $h = e$.

\item[(2)] We compute $r = h^2-h = 2g_4+2g_5+2$ and $f = h+r-2hr = -3g_4-3g_5-3$.

\item[(3)] Using the structure constants, we calculate $f^2-f = 30g_4+30g_5+30$.

\item[(4)] Since $(30,0,0,0,30,30) \in \Syz_{\Z}(\mathcal{G})$, we return the primitive idempotent~$f$.

\end{enumerate}
\end{exa}

The number of iterations in Algorithm~\ref{alg:lifting} necessary to lift the idempotents can be bounded as follows.

\begin{prop}\label{prop:bound}
Let $R$ be a finite $\Z$-algebra of rank $r$, and let $T$ be the torsion subgroup of~$R^+$.
\begin{enumerate}
\item[(a)] We have $\Rad(0)^m = \{0\}$ for $m = r+ \length_{\Z}(T)$.

\item[(b)] Let $p_1^{e_1}, \dots, p_s^{e_s}$ be the elementary divisors of $R$. Then Algorithm~\ref{alg:lifting} 
terminates after at most $\lceil \log_2(r+e_1+ \cdots + e_s) \rceil$ steps.

\end{enumerate}
\end{prop}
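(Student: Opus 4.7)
The plan for (a) is to split the descending filtration by powers of $N = \Rad(0)$ into a ``rational'' phase and a ``torsion'' phase. First, the torsion subgroup $T$ of $R^+$ is automatically an ideal of $R$, since $n \cdot (rx) = r \cdot (nx) = 0$ whenever $nx = 0$. Hence $R/T$ is $\Z$-torsion-free and embeds into $R \otimes_\Z \Q$, an Artinian commutative $\Q$-algebra of $\Q$-dimension $r$. A standard chain-of-ideals argument (each strict descent dropping the $\Q$-dimension by at least one) shows that $(N \otimes_\Z \Q)^r = 0$, and pulling back through the embedding $R/T \hookrightarrow R \otimes_\Z \Q$ yields $N^r \subseteq T$.

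The rest of (a) is then a pure length argument inside $T$: the chain $N^r \supseteq N^{r+1} \supseteq \cdots \supseteq 0$ consists of $\Z$-submodules of the finite-length $\Z$-module $T$, so it has at most $\length_\Z(T)$ strict inclusions. Combining the two phases gives $N^{r + \length_\Z(T)} = 0$, which is (a).

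For (b), I would combine (a) with the doubling behavior established in the correctness proof of Algorithm~\ref{alg:lifting}: if $h$ is idempotent modulo $N^{2^k}$, then the next iterate $f = h + r - 2hr$ is idempotent modulo $N^{2^{k+1}}$. Since the starting element $h_0 = e$ is idempotent modulo $N = N^{2^0}$, after $k$ iterations the element produced is idempotent modulo $N^{2^k}$. Because $T \cong \bigoplus_{i=1}^s \Z/p_i^{e_i}\Z$, we have $\length_\Z(T) = e_1 + \cdots + e_s$, so (a) yields $N^m = 0$ for $m = r + e_1 + \cdots + e_s$. Termination thus occurs as soon as $2^k \geq m$, which first happens at $k = \lceil \log_2(r + e_1 + \cdots + e_s) \rceil$. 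The only step requiring real content is the passage $N^r \subseteq T$ in (a); once that is in hand, everything else is essentially bookkeeping.
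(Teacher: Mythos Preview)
Your proof is correct and shares the paper's two-phase skeleton: first push $N^r$ into the torsion subgroup~$T$, then use the finite length of~$T$ to kill the remaining powers. The difference is in how the first phase is executed. The paper fixes a single $f \in \Rad(0)$, studies the ascending kernel chain of the multiplication-by-$f$ endomorphism, and shows that the ranks of the kernels strictly increase until reaching~$r$, so that $f^r R \subseteq T$; a length count inside~$T$ then finishes. As literally written this gives only $f^m = 0$ for each individual~$f$, not $\Rad(0)^m = 0$; one must observe separately that the same rank-then-length reasoning applies to the descending chain $N \supseteq N^2 \supseteq \cdots$ to recover the ideal-power statement. Your route via $R \otimes_{\Z} \Q$ sidesteps this: the dimension argument in the $r$-dimensional $\Q$-algebra gives $(N \otimes_{\Z} \Q)^r = 0$ directly, and pulling back yields $N^r \subseteq T$ without ever fixing a particular nilpotent. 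Part~(b) is handled identically in both arguments.
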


\begin{proof}
To prove (a), note that an element $f \in \Rad(0)$ yields a nilpotent $\Z$-linear endomorphism $\varphi$
of $R$ given by multiplication with $f$. One therefore obtains a chain
$$
\Ker(\varphi) \subsetneq \Ker(\varphi^2) \subsetneq \cdots \subsetneq R.
$$
Now we show that if $\rank(\Ker(\varphi^i)) > 0$, then $\rank(\Ker(\varphi^i)) < \rank(\Ker(\varphi^{i+1}))$.
Note that $\rank(\Ker(\varphi^i)) = \rank(\Ker(\varphi^{i+1}))$ if and only if $\Ker(\varphi^{i+1})/\Ker(\varphi^i)$
is a torsion module. Let $\Ker(\varphi^{i+1})/\Ker(\varphi^i)$ be a torsion module. We prove by induction that
this implies $\Ker(\varphi^{i+k+1})/\Ker(\varphi^{i+k})$ is a torsion module for all $k \in \mathbb{N}$. For $k=0$
the claim is true by assumption. Now assume that $\Ker(\varphi^{i+k})/\Ker(\varphi^{i+k-1})$ is a torsion module,
and let $x \in \Ker(\varphi^{i+k+1})$. Then we have $\varphi(x) \in \Ker(\varphi^{i+k})$, and there exists a
non-zero $c \in \Z$ with $c \varphi(x) \in \Ker(\varphi^{i+k-1})$. Hence we obtain $cx \in \Ker(\varphi^{i+k})$.

Thus we conclude that $\Ker(\varphi^r)$ has rank $r$, and therefore that $\varphi^r(R)$ is a submodule of~$T$. This
forces $\varphi^{r+ \length_{\Z}(T)} = 0$.

Part (b) follows immediately from~(a), since the length of the torsion is given by the number of elementary divisors
$p_i^{e_i}$ counted with multiplicity~$e_i$.
\end{proof}

In order to compute the primitive idempotents of $R=P/I$, we can now use Algorithm~\ref{alg:lifting} to lift the
idempotents of~$R/\Rad(0)$. For the task of finding the primitive idempotents of $R/\Rad(0)$, we consider the minimal
associated primes of~$I$. Let us recall the following remark from~\cite{KMW}.

\begin{rem}
Let~$T$ be a commutative, unitary, noetherian ring.
\begin{enumerate}
\item[(a)] Given an idempotent $e\in T$, the set $\mathcal{V}(1-e)$
is both open and closed in~$\Spec(T)$.

\item[(b)] If $U \subseteq \Spec(T)$ is a subset which is both open and closed,
there exists a unique idempotent $e\in T$ such that in $T_{\mathfrak{p}}/\mathfrak{p} T_{\mathfrak{p}}$ 
we have $\bar{e}=1$ for $\mathfrak{p}\in U$ and $\bar{e}=0$ otherwise.

\item[(c)] The correspondence given in~(a) and~(b) is 1-1. The primitive idempotents
correspond uniquely to the connected components of~$\Spec(T)$.
\end{enumerate}
\end{rem}

Therefore, in order to compute the primitive idempotents of~$R/\Rad(0)$, we need to calculate the connected
components of~$\Spec(R/\Rad(0))$. Since the ring $R/\Rad(0)$ might have infinitely many prime ideals, we use the following
approach to describe the connected components of~$\Spec(R/\Rad(0))$.

\begin{defi}
Let $R$ be a finite $\Z$-algebra, and let $\minPrimes(R)$ be the set of minimal associated prime ideals of $R$.
A maximal subset of $\minPrimes(R)$ such that all corresponding prime ideals are part of the same connected
component of $\Spec(R)$ is called a \textbf{connected component} of~$\minPrimes(R)$.
\end{defi}

Since $R$ is a finite $\Z$-algebra, the associated primes of $R$ are either of
height~$n$ and do not contain a non-zero integer, or they are of height $n+1$ and hence maximal ideals.
Now Algorithm~\ref{alg:connected} determines the connected components of~$\minPrimes(R)$.

\begin{algorithm}{\bf(Computing the Connected Components of $\minPrimes(R)$)}\label{alg:connected}\\
Let $R$ be an explicitly given finite $\Z$-algebra. Consider the following sequence of instructions.
\begin{enumerate}
    \item[(1)] Compute the set of minimal associated prime ideals of $R$. Let $\mathfrak{m}_1,\dots,\mathfrak{m}_\ell$
        be the minimal associated prime ideals of height~$n+1$, and let $\mathfrak{p}_1,\dots,\mathfrak{p}_m$ be the
        minimal associated primes ideals of height~$n$.

    \item[(2)] Let $M = \{ \{\mathfrak{p}_1\}, \dots, \{\mathfrak{p}_m\}\}$.

    \item[(3)] While there are sets $C,C' \in M$ such that there exist $\mathfrak{p}_i \in C$ and $\mathfrak{p}_j \in C'$
        with $\mathfrak{p}_i + \mathfrak{p}_j \ne \langle 1 \rangle$ replace~$C$ and~$C'$ in~$M$ by $C \cup C'$.

    \item[(4)] For every ideal $\mathfrak{m}_i$, append the set $\{ \mathfrak{m}_i \}$ to~$M$.

    \item[(5)] Return $M$.
\end{enumerate}
This is an algorithm which computes a set $M=\{C_1,\dots,C_\nu\}$ such that $C_1,\dots,C_\nu$ are the connected
components of~$\minPrimes(R)$. It is in ZPPIF.
\end{algorithm}

\begin{proof}
The following observations show the correctness of this algorithm. An associated prime ideal of height~$n+1$
is maximal and therefore forms its own connected component. Two prime ideals $\mathfrak{p}_i$ and $\mathfrak{p}_j$ of
height~$n$ belong to the same connected component if and only if there is a maximal ideal $\mathfrak{m}$ containing
both $\mathfrak{p}_i$ and $\mathfrak{p}_j$, which is equivalent to $\mathfrak{p}_i + \mathfrak{p}_j \ne \langle 1 \rangle$.

Let us now show that the algorithm is in ZPPIF. The associated primes of $R$ can be computed in ZPPIF using
Algorithm~\ref{alg:ass_primes}. Then only two types of computations remain. Namely, we need to decide whether the sum
of two primes is equal to $\langle 1 \rangle$ and whether one prime ideal is contained in another. Both of these tasks
can be achieved in polynomial time by Proposition~\ref{prop:elementary_computations}.
\end{proof}

A more general version of this algorithm which computes the connected components of a set of (non-minimal) associated
prime ideals is given in Section~4 of~\cite{KMW}.

\begin{exa}\label{example:connected}
Consider the finite $\mathbb{Z}$-algebra $R$ given by the explicit presentation $R= \mathbb{Z}[x,y]/I$ with
$I = \langle x^2 + 5x,\, xy,\, y^2 - y,\, 6y \rangle$. We follow the above algorithm and compute the
connected components of $\minPrimes(R)$.
\begin{enumerate}
\item[(1)] Algorithm~\ref{alg:ass_primes} yields the minimal associated primes
$\mathfrak{m}_1 = \langle \bar{x}, \bar{y}-1, 3 \rangle$ and
$\mathfrak{m}_2 = \langle \bar{x}, \bar{y}+1, 2 \rangle$ of height~3, as well as
and the minimal associated primes $\mathfrak{p}_1 = \langle \bar{x},\bar{y} \rangle$ and
$\mathfrak{p}_2 = \langle \bar{y}, \bar{x}+5 \rangle$ of height~2.

\item[(2)] We let $M = \{ \{\mathfrak{p}_1\}, \{\mathfrak{p}_2\}\}$.

\item[(3)] Since $\mathfrak{p}_1 + \mathfrak{p}_2 = \langle \bar{x},\bar{y},5 \rangle \neq \langle 1 \rangle$,
we replace $\{\mathfrak{p}_1\}$ and $\{\mathfrak{p}_2\}$ by $\{\mathfrak{p}_1 + \mathfrak{p}_2\}$ and obtain
$M = \{ \{\mathfrak{p}_1 + \mathfrak{p}_2\}\}$.

\item[(4)] We add $\{\mathfrak{m}_1\}$ and $\{\mathfrak{m}_2\}$ to $M$.

\item[(5)] Thus the connected components of $\minPrimes(R)$ are
$\{\{\mathfrak{m}_1\}, \{\mathfrak{m}_2\}, \{\mathfrak{p}_1, \mathfrak{p}_2\}\}$.

\end{enumerate}
\end{exa}

From the connected components of $\minPrimes(R)$ we can now derive the primitive idempotents of~$R$.

\begin{algorithm}{\bf (Computing the Primitive Idempotents)}\label{alg:comp_prim_idemp}\\
Let $R$ be an explicitly given finite $\Z$-algebra.
The following steps define an algorithm which computes the primitive idempotents of~$R$ in ZPPIF.
\begin{enumerate}
\item[(1)] Compute the connected components $C_1,\dots,C_\nu$ of~$\minPrimes(R)$ using Alg.~\ref{alg:connected}.

\item[(2)] Compute $J = \bigcap_{\mathfrak{p} \in \minPrimes(R)} \mathfrak{p}$.

\item[(3)] For $i=1,\dots,\nu$, compute $J_i = \bigcap_{\mathfrak{p} \in C_i} \mathfrak{p}$.

\item[(4)] Compute the preimages $q_1, \dots, q_\nu$ of $e_1, \dots, e_\nu$ under the canonical
$\Z$-linear map $R/J \rightarrow R/J_1 \times \cdots \times R/J_\nu$.

\item[(5)] Using Algorithm~\ref{alg:lifting}, lift the idempotents $q_1, \dots, q_\nu$ of $R/J$ to idempotents
of~$R$ and return them.

\end{enumerate}
\end{algorithm}

\begin{proof}
For a proof of the correctness of this algorithm, we again refer to Section~4 of~\cite{KMW}.
Let us analyze the complexity of this algorithm. Step~(1) can be performed in ZPPIF using Algorithm~\ref{alg:connected}.
The remaining steps can performed in polynomial time by Proposition~\ref{prop:elementary_computations},
Algorithm~\ref{prop:preimage}, and Algorithm~\ref{alg:lifting}. The number of iterations necessary to perform
Algorithm~\ref{alg:lifting} has a polynomial bound by Proposition~\ref{prop:bound}.
\end{proof}

\begin{exa}
Let us continue Example~\ref{example:connected} and compute the primitive idempotents of the finite
$\mathbb{Z}$-algebra $\mathbb{Z}[x,y]/I$ with $I =\left\langle x^2 + 5x, xy, y^2 - y, 6y \right\rangle$.
\begin{enumerate}
\item[(1)] Using Algorithm~\ref{alg:connected}, we already computed the connected components
    $\{\mathfrak{m}_1\}$, $\{\mathfrak{m}_2\}$, and $\{\mathfrak{p}_1, \mathfrak{p}_2\}\}$ of $\minPrimes(R)$ in
    Example~\ref{example:connected}.

\item[(2)] Using Proposition~\ref{prop:elementary_computations}.e, we calculate
    $J = \mathfrak{p}_1 \cap \mathfrak{p}_2 \cap \mathfrak{m}_1 \cap \mathfrak{m}_2 = I$.

\item[(3)] Using Proposition~\ref{prop:elementary_computations}.e, we compute
    $\mathfrak{p}_1 \cap \mathfrak{p}_2 = \left\langle \bar{y}, \bar{x}^2+5\bar{x} \right\rangle$.

\item[(4)] We apply Algorithm~\ref{prop:preimage} to the $\Z$-linear map
    $R/I \rightarrow R/(\mathfrak{p}_1 \cap \mathfrak{p}_2) \times  R/\mathfrak{m}_1 \times R/\mathfrak{m}_2$
    and obtain the preimages $\bar{y}+1$, $3 \bar{y}$, $-2\bar{y}$ of $e_1, e_2, e_3$.

\item[(5)] Since $I=J$, we do not need to lift the idempotents. Thus we return the primitive idempotents
    $\bar{y}+1$, $3 \bar{y}$, $-2\bar{y}$ of~$R$.

\end{enumerate}
\end{exa}

\bigbreak
%
%

\section{Explicit $\Z$-Algebra Presentations and Strong Gr\"obner Bases}
\label{sec6}

In the previous sections we made the assumption that a $\Z$-algebra is explicitly given, i.e., given
by $\Z$-module generators, their linear relations, and structure constants. This information can be encoded
in an ideal $I \subseteq P = \Z[x_1, \dots, x_n]$ such that $R=P/I$. More precisely, let
\begin{equation*}
\tag{$\ast$} I = \langle x_i x_j - \ssum_{k=0}^n c_{ijk} x_k, \; \ssum_{k=0}^n a_{\ell k} g_k
\mid i,j=1, \dots, n, \; \ell=1, \dots, m \rangle
\end{equation*}
be the ideal in $P = \Z[x_1, \dots, x_n]$ encoding the information of an explicitly given
$\Z$-algebra $R=P/I$ as in Remark~\ref{remark:input}.

In this section we show that this representation of~$R$ is polynomial time equivalent
to computing a strong Gr\"obner basis of~$I$. This notion is defined as follows.

\begin{defi}
Let $I$ be an ideal in $P=\Z[x_1,\dots,x_n]$, and let~$\sigma$ be a term ordering on~$\mathbb{T}^n$.
A set of polynomials $G=\{g_1,\dots,g_r\}$ in~$I$ is called a \textbf{strong $\sigma$-Gr\"obner basis} 
of~$I$ if, for every polynomial $f\in I\setminus \{0\}$,
there exists an index $i\in\{1,\dots,r\}$ such that $\LM(f)$ is a multiple of $\LM(g_i)$.
\end{defi}

In the first subsection we show that a presentation $R=P/I$ with~$I$ as above allows us to compute 
a strong Gr\"obner basis of $I$ in polynomial time. In the second subsection we prove that, conversely, 
if $R=P/I$ and we know a strong Gr\"obner basis of~$I$, 
then we can calculate a presentation as in Remark~\ref{remark:input} in polynomial time.

%
%

\subsection{Computing a Strong Gr\"obner Basis of an Explicitly Given $\Z$-Algebra}

Let us begin with the task of computing a strong Gr\"obner basis for an ideal~$I$ as above. 
More generally, consider the following situation. Let
$P = \Z[x_1,\dots,x_n]$, and let $I_1, \dots, I_s \subseteq P$ be ideals such that $P/I_j$ is a finite
$\Z$-algebra for $j=1,\dots,s$. Our goal is to compute a strong Gr\"obner basis of their intersection
$I_1\cap \cdots \cap I_s$. For 0-dimensional ideals in a polynomial ring over a field, an intersection like this
can be computed using the generalized Buchberger-M\"oller algorithm (see~\cite{AKR}). 
In the following we extend this algorithm to ideals in $\Z[x_1,\dots,x_n]$.

Before formulating this generalization, we need to address the task of representing the residue classes
in $P/I_j$ using suitable systems of generators.

\begin{rem}\label{rem:reprvector}
Let $I$ be an ideal in~$P$ such that $P/I$ is a finite $\Z$-algebra, and let $\pi:\, P \longrightarrow P/I$ be the canonical
epimorphism. We need to be able to express the image $\pi(f)$ of an element $f \in P$ as a linear combination of
some system of $\Z$-module generators of $P/I$.

Let $(t_1, \dots, t_\mu)$ be a tuple of terms such that $\overline{\mathcal{O}} =
(\overline{t}_1, \dots, \overline{t}_\mu)$ generates $P/I$ as a $\Z$-module. 
Then a tuple $(a_1, \dots, a_\mu) \in \Z^\mu$ such that $\pi(f) = a_i \overline{t}_1+ \cdots+ 
a_\mu \overline{t}_\mu$ is called a \textbf{representation vector} of~$f$ with respect to~$\mathcal{O}$.
Representation vectors are in general not unique, but can be calculated efficiently in several settings.
\begin{enumerate}
\item[(a)] If~$I$ is given as in $(\ast)$ and $f\in P$, we can replace products $x_i x_j$ in~$f$
repeatedly by linear combinations $\ssum_{k=0}^n c_{ijk} x_k$ until the resulting polynomial~$g$ is linear.
Then $\pi(f)=\pi(g)$ is a $\Z$-linear combination of the residue classes of the terms in $\{1,x_1,\dots,x_n\}$.

\item[(b)] If~$I$ is given by a strong Gr\"obner basis with respect to a term ordering~$\sigma$, we
can use $\mathcal{O}_\sigma(I) = \mathbb{T}^n \setminus L$, where $L = \{m \in \LM(I) \mid \LC(m) = 1\}$,
and represent $\pi(f)$ for an element $f\in P$ by the residue class of its normal form $\NF_{\sigma,I}(f)$ 
which is a $\Z$-linear combination of the terms in~$\mathcal{O}_\sigma(I)$.
\end{enumerate}
In either case, if we have an implementation of a function that represents $\pi(f)$ for every polynomial $f\in P$
in the form $\pi(f) = a_1 \overline{t}_1+ \cdots + a_\mu \overline{t}_\mu$ then we write
$\RV_{\mathcal{O}}(f) = (a_1,\dots,a_\mu)$ for the corresponding representation vector.
\end{rem}

Now we can formulate the generalized Buchberger-M\"oller algorithm for ideals in $P = \Z[x_1, \dots, x_n]$.

\begin{algorithm}{\bf (Intersecting Ideals in $\Z[x_1,\dots,x_n]$)}\label{alg:BMintersection}\\
For $i=1,\dots,s$, let $I_1, \dots, I_s$ be ideals in~$P$ such that $P/I_i$ is a finite $\Z$-algebra,
and let $\mathcal{O}_i = \{t_{i1}, \dots, t_{i\mu_i}\} \subseteq \mathbb{T}^n$ be a set 
of~$\mu_i$ terms such that their residue classes generate $P/I_i$ as a $\Z$-module. Furthermore,
we assume that a $\Z$-submodule~$U_i$ of~$\Z^{\mu_i}$ is given such that the $\Z$-linear map
$P/I_i \longrightarrow \Z^{\mu_i}/U_i$ defined by $\overline{t}_{ij} \mapsto \overline{e}_i$ is an isomorphism.
Finally, let~$\sigma$ be a degree compatible term ordering on~$\mathbb{T}^n$. Consider the following instructions.
\begin{enumerate}
\item[(1)] Start with empty lists $G= [\;]$, $\mathcal{O}=[\;]$, $M = [\;]$, and a list $L=[1]$.

\item[(2)] Let $N = \{n_1, \dots, n_k \} \subseteq \Z^\mu$ such that
$\Z^{\mu_1}/U_1 \oplus \cdots \oplus \Z^{\mu_s}/U_s \cong \Z^\mu/\langle N \rangle$ for some $\mu\ge 1$.

\item[(3)] If $L$ is empty, return the pair $[G, \mathcal{O}]$ and stop. Otherwise, choose the power product
$t = \min_\sigma(L)$ and remove it from~$L$.

\item[(4)] Compute the vector $v = \RV_{\mathcal{O}_1}(t)\oplus \cdots \oplus \RV_{\mathcal{O}_s}(t) \in \Z^\mu$.

\item[(5)] Let $m_1, \dots, m_\ell$ be the elements of $M$. Compute a $\Z$-basis $B$ in Hermite normal form of
the set of solutions of the homogeneous linear equation
$$
v x_0 - \ssum_{i=1}^\ell m_i x_i  - \ssum_{i=\ell+1}^{k+\ell} n_i x_i = 0
$$
in the indeterminates $x_0, \dots, x_{k+\ell}$.

\item[(6)] If it exists, let $(a_i) \in \Z^{k+\ell+1}$ be a basis element in~$B$ with $a_0 \ne 0$. Append the
polynomial $a_0 t - \sum_{i=1}^\ell a_i t_i$ to the list $G$, where $t_i$ is the $i$-th power
product in the list $\mathcal{O}$.

\item[(7)] If there exists no such solution or if the first component $a_0$ of every solution is different from~1,
append the vector $v$ to $M$ and the term $t$ to the list~$\mathcal{O}$. Add to~$L$ those
elements of $\{x_1t, \dots, x_nt\}$ which are neither multiples of an element of~$L$ nor of
an element of $\{\LM(g) \mid g \in G\}$.

\item[(8)] Continue with step (3).
\end{enumerate}
This is an algorithm which computes a pair $(G, \mathcal{O})$ such that $G$ is a reduced strong $\sigma$-Gr\"obner
basis of $I = \bigcap_{i=1}^s I_s$ and the residue classes of the elements in $\mathcal{O}$ generate the
$\Z$-module $P/I$.
\end{algorithm}

\begin{proof}
First we prove correctness using induction on the iterations of the algorithm. More precisely, 
we show that if the values of~$G$ and~$\mathcal{O}$ are correct at the start of an iteration then 
they are still correct at the end of the iteration. 

If $L$ is not empty then it contains a minimal element~$t$ with respect to~$\sigma$. So, at the start 
of each iteration we have that the list~$G$ contains polynomials that can be extended to a minimal 
strong Gr\"obner basis of the intersection and whose leading terms are $\sigma$-smaller than~$t$. 
Consider the case $t >_\sigma 1$. If $(a_i) \in \Z^{k+\ell+1}$ is the solution of the linear system in step~(5) 
then $a_0 v - \sum_{i=1}^\ell a_i m_i \in \langle N \rangle$, and hence
$f = a_0 t - \sum_{i=1}^\ell a_i t_i \in I_1 \cap \cdots \cap I_s$. Since the solution space is in Hermite normal
form, every other polynomial~$h$ in the intersection with $\LT(h) = t$ has to satisfy $a_0 t \mid \LM(h)$, 
and~$f$ cannot be reduced further using the elements in~$G$. This means that a reduced strong Gr\"obner basis 
of the intersection has to contain~$f$, and~$f$ is added to~$G$ in step~(6).

If there exists no solution with non-zero first component, or the first component of all solutions is different from~1, 
there is no element~$g$ in~$G$ such that $\LM(g) \mid t$. Hence the term~$t$ has to be added to~$\mathcal{O}$.

Finally, the list~$L$ is updated such that its $\sigma$-smallest element is always the $\sigma$-smallest term
greater than~$t$ and not divisible by the leading monomial of some element of~$G$.
Since $P/(I_1\cap \cdots \cap I_s)$ is a finite $\Z$-module, there exists for every $i\in \{1,\dots,n\}$
a number $\alpha_i\ge 1$ such that $x_i^{\alpha_i} \in \LT(I_1\cap \cdots \cap I_s)$. Hence only a finite
number of terms can be added to the list~$L$. This proves that the procedure terminates.
\end{proof}

Let us apply this algorithm to an example.

\begin{exa}
Let $\sigma =$ \texttt{DegRevLex}, and consider the ideals $I = \langle 2x-y, x^2, y^2, xy \rangle$ and
$J = \langle x^2,y^2,2 \rangle$ in $P=\Z[x,y]$. We have
$$
\mathcal{O}_I = \mathbb{T}^n \setminus \langle x^2, y^2, xy \rangle = \{1,y,x\} \quad\text{and}\quad
\mathcal{O}_J = \mathbb{T}^n \setminus \langle x^2, y^2 \rangle = \{1,y,x,xy\}.
$$
The following table shows how Algorithm~\ref{alg:BMintersection} can be applied to compute a strong
$\sigma$-Gr\"obner basis of $I \cap J$. The first five rows of this table correspond to the elements of~$N$. The
algorithm considers the terms $1,y,x,y^2,xy,x^2$ in this order. Rows 6--11 in the table correspond to the
representation vectors computed in step~(4) of each iteration.
%
$$
\begin{array}{r|rrr|rrrrl}
         & 1 & y & x & 1 & y & x & xy & \\
         \cline{2-8}
         & 0 & -1 & 2 & 0 & 0 & 0 & 0 & \\
         & 0 & 0 & 0 & 2 & 0 & 0 & 0 & \\
         & 0 & 0 & 0 & 0 & 2 & 0 & 0 & \\
         & 0 & 0 & 0 & 0 & 0 & 2 & 0 & \\
         & 0 & 0 & 0 & 0 & 0 & 0 & 2 & \\
         \cline{2-8}
        1& 1 & 0 & 0 & 1 & 0 & 0 & 0 & \rightarrow G = [\;]\\
        y& 0 & 1 & 0 & 0 & 1 & 0 & 0 & \rightarrow G = [\;] \\
        x& 0 & 0 & 1 & 0 & 0 & 1 & 0 & \rightarrow G = [4x-2y]\\
        y^2& 0 & 0 & 0 & 0 & 0 & 0 & 0  & \rightarrow G = [4x-2y,\; y^2]\\
        xy& 0 & 0 & 0 & 0 & 0 & 0 & 1  & \rightarrow G = [4x-2y, \; y^2, \; 2xy]\\
        x^2& 0 & 0 & 0 & 0 & 0 & 0 & 0  & \rightarrow G = [4x-2y, \; y^2, \; 2xy, \; x^2]\\
\end{array}
$$
For instance, let us examine the 5-th iteration, where the algorithm handles the term~$xy$. In step~(5) of this
iteration we solve the homogeneous linear system of equations given by rows 1--8 and row 10 of the table. This is
because in the 4-th iteration we did not add the representation vector to the set~$M$. The Hermite normal form of a
basis of the solution space is given by
$$
\begin{bmatrix}
        2 & 0 & 0 & 0 & 0 & 0 & 0 & 0 & -1 \\
        0 & 4 & -2 & 0 & -2 & 0 & 1 & -2 & 0
\end{bmatrix}.
$$
Hence we append the element~$2xy$ to~$G$. Altogether, we obtain the strong $\sigma$-Gr\"obner
basis $\{4x-2y, y^2, 2xy, x^2\}$ of $I \cap J$.
\end{exa}

If the $\Z$-algebras $P/I_i$ determined by the ideals $I_i$ are
given as in Remark~\ref{remark:input}, then it is not necessary to compute their strong
Gr\"obner bases separately, as the following corollary shows.

\begin{cor}{\bf (Computing a Strong Gr\"obner Basis)}\label{cor:CompStrongGB}\\
Suppose that a finite $\Z$-algebra $R$ is explicitly given as in Remark~\ref{remark:input}, and let~$I$ be 
the ideal in~$P$ such that $R = P/I$ and~$I$ is of the form described in~$(\ast)$.
Let~$\sigma$ be a degree compatible term ordering on~$\mathbb{T}^n$. Then Algorithm~\ref{alg:BMintersection}
computes a strong $\sigma$-Gr\"obner basis of~$I$ in polynomial time.
\end{cor}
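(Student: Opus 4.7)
The plan is to invoke Algorithm~\ref{alg:BMintersection} with $s=1$ and $I_1 = I$. First I would fix the inputs: take $\mathcal{O}_1 = (1, x_1, \dots, x_n)$ with $\mu_1 = n+1$; by virtue of the quadratic generators in~$(\ast)$ the residue classes of these terms generate $P/I$ as a $\Z$-module. The submodule $U_1 \subseteq \Z^{n+1}$ required by the algorithm is the row span of the integer matrix~$A$ from Remark~\ref{remark:input}(a), and the canonical isomorphism $P/I \cong \Z^{n+1}/U_1$ sends each $\overline{x}_j$ to~$\overline{e}_j$. For the routine $\RV_{\mathcal{O}_1}$ I would use Remark~\ref{rem:reprvector}(a): given a term $t$, iteratively replace every product $x_ix_j$ by $\ssum_{k=0}^n c_{ijk} x_k$ until the result is a $\Z$-linear combination of $1, x_1, \dots, x_n$.

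Next I would bound the number of iterations. Since $\sigma$ is degree compatible, step~(3) feeds terms to the loop in non-decreasing total degree. For every degree-two candidate $t = x_ix_j$, the representation vector $v = (c_{ij0}, c_{ij1}, \dots, c_{ijn})$ is an integer combination of the $v$-vectors already collected in~$M$ for $1, x_1, \dots, x_n$, so the homogeneous system in step~(5) admits a solution with $a_0 = 1$. Step~(6) therefore appends the quadratic generator $x_ix_j - \ssum_{k=0}^n c_{ijk} x_k$ of~$I$ to~$G$ with leading monomial $x_ix_j$, and step~(7) is skipped. Consequently no term of degree~$\ge 3$ is ever enqueued in~$L$, the main loop terminates after at most $\binom{n+2}{2} = O(n^2)$ iterations, and only terms of degree at most two are ever inspected.

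Finally I would analyze the per-iteration cost. Throughout the run $|\mathcal{O}| \le n+1$, $|M| \le n+1$, and $|N| \le m$, so the homogeneous system in step~(5) has $O(n+m)$ unknowns and $n+1$ scalar equations. Its coefficients are the structure constants $c_{ijk}$, the entries of~$A$, and representation vectors~$v$ produced by a single substitution step applied to a degree-two monomial, so their bit sizes are polynomial in~$\gamma$. By Remark~\ref{remark:computs_over_Z}(b) the Hermite normal form of the solution space can be computed in polynomial time in $n$, $m$, and $\gamma$, and the updates in steps~(6)--(8) are clearly polynomial. The main technical point to verify is that the entries of these successive Hermite bases remain polynomially bounded across the $O(n^2)$ iterations, but since $\mathcal{O}$ never grows beyond its degree-$\le 1$ content and each representation vector is obtained by a single substitution, the Remark~\ref{remark:computs_over_Z}(b) bounds apply uniformly. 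Combining the iteration count with this per-iteration estimate yields the claimed polynomial running time.
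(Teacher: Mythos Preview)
Your argument is correct and follows the same route as the paper's: apply Algorithm~\ref{alg:BMintersection} with $s=1$, compute representation vectors via the structure constants as in Remark~\ref{rem:reprvector}(a), and appeal to Remark~\ref{remark:computs_over_Z} for the linear systems. Your iteration analysis is in fact more explicit than the paper's (which simply asserts that the number of iterations is bounded by the number of generators of~$R$); one small imprecision is that the representation vector $v=(c_{ij0},\dots,c_{ijn})$ of a degree-two term need not lie in the $\Z$-span of~$M$ alone if some~$x_k$ was eliminated with $a_0=1$ during the degree-one phase, but it always lies in $\langle M\rangle+\langle N\rangle=\Z^{n+1}$, which is all you need for the $a_0=1$ solution in step~(5).
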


\begin{proof}
As mentioned in Remark~\ref{rem:reprvector}.a, the representation vector in step~(4) 
of an element $f \in P$ can be obtained by simplifying every product of indeterminates using the structure constants. 
The linear equation in step~(6) can be solved in polynomial time by Remark~\ref{remark:computs_over_Z}. 
The claim then follows from the fact that the number of iterations is bounded by the number of generators of~$R$.
\end{proof}

%
%

\subsection{Computing an Explicit Representation}
\label{subsec:6.2}

Now let $I \subseteq P$ be an ideal such that $R=P/I$ is a finite $\Z$-algebra. 
Given a strong Gr\"obner basis of~$I$ with respect to some term ordering~$\sigma$, our goal is
to compute a representation of~$R$ as in Remark~\ref{remark:input}.
The first step is to find a suitable system of $\Z$-module generators of~$R$.

\begin{prop}{\bf (Macaulay's Basis Theorem for Finite $\Z$-Algebras)}\label{prop:macaulay}\\
Let $I \subseteq P$ be an ideal such that $P/I$ is a finite $\Z$-algebra, let~$\sigma$
be a term ordering on~$\mathbb{T}^n$, and let $L = \{m \in \LM(I) \mid \LC(m) = 1\}$ be the set 
of all monic leading monomials of~$I$. 
Then the residue classes of the terms in $\mathcal{O}_\sigma = \mathbb{T}^n \setminus L$ form a 
generating set of the $\Z$-module $P/I$.
\end{prop}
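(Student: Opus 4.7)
The plan is to prove this by Noetherian induction on the leading term with respect to $\sigma$, using the fact that a term ordering is by definition a well-ordering on $\mathbb{T}^n$. Given $f \in P$, I want to show that the residue class $\bar{f} \in P/I$ lies in the $\Z$-span of $\{\bar{t} \mid t \in \mathcal{O}_\sigma\}$. The base case $f = 0$ is immediate, with the empty linear combination.

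For the inductive step, I would set $t = \LT(f)$ and split into two cases. If $t \notin L$, then $t \in \mathcal{O}_\sigma$; writing $f = \LC(f)\cdot t + h$ with $\LT(h) <_\sigma t$, the inductive hypothesis applied to $h$ yields an expression of $\bar h$ as a $\Z$-linear combination of residue classes of terms in $\mathcal{O}_\sigma$, and adding $\LC(f)\,\bar{t}$ completes this case. If instead $t \in L$, then by the very definition of $L$ there exists $g \in I$ with $\LM(g) = t$ and $\LC(g) = 1$. Setting $f' = f - \LC(f)\cdot g$, the equality $\LC(g) = 1$ forces the leading terms to cancel exactly, so $\LT(f') <_\sigma t$; since $f' \equiv f \pmod{I}$, the inductive hypothesis applied to $f'$ gives the conclusion for $\bar f = \bar{f'}$.

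The induction is well-founded because $\sigma$ is a well-ordering on $\mathbb{T}^n$, so every strictly $\sigma$-descending sequence of terms terminates. The only conceptually delicate point—and the reason why $L$ is singled out as the set of \emph{monic} leading monomials—is that over $\Z$ one cannot divide by non-unit coefficients, so the single-step elimination $f \mapsto f - \LC(f)g$ requires exactly that $\LC(g) = 1$; if $\LC(g)$ were a non-unit integer, the leading term of $f$ could not in general be killed by an integer multiple of $g$. Note that the assumption that $P/I$ is a finite $\Z$-algebra plays no direct role in the generation claim itself; it is only used elsewhere to guarantee that $\mathcal{O}_\sigma$ is a finite set. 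I do not expect any genuine obstacles along this route, as the argument mirrors the classical field case once one restricts attention to the monic part of $\LM(I)$.
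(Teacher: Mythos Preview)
Your proof is correct and follows essentially the same approach as the paper: the paper argues by minimal counterexample on $\LT(f)$ rather than by explicit Noetherian induction, but the two case splits (whether $\LT(f)\in\mathcal{O}_\sigma$ or $\LT(f)\in L$) and the reduction steps are identical. Your observation that the finiteness hypothesis on $P/I$ is not used in the generation argument itself is also accurate.
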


\begin{proof}
It suffices to show that the $\Z$-submodule $Q = \sum_{t \in \mathcal{O}_\sigma} \Z(t+I) =
\sum_{t \in \mathcal{O}_\sigma} \Z t + I$ of~$P$ is equal to~$P$. Suppose that $Q \subsetneq P$, and let
$f \in P \setminus Q$ be a polynomial with minimal leading term. Then $\LT(f) \in \mathcal{O}$ 
would imply $f - \LC(f)\LT(f) \in P \setminus Q$ which would contradict the minimality of~$f$.
Hence we have $\LT(f) \in L$. This means there exist a polynomial $g \in I$ and a term $t \in \mathbb{T}^n$ 
such that $\LC(g) = 1$ and $\LT(f) = t \LT(g)$. But then $f - \LC(f)tg$ has smaller 
leading term, again contradicting the minimality of~$f$.
\end{proof}

The set $\mathcal{O}_\sigma$ in this proposition can be determined from a strong Gr\"obner basis of~$I$. 
Having obtained a tuple of $\Z$-module generators of $P/I$, it remains to determine its relation module.

For every polynomial $f \in P$, the Division Algorithm with respect to a $\sigma$-Gr\"obner basis of~$I$ yields its
normal form $\NF_{\sigma, I}(f) = \sum_{i=1}^\mu a_i t_i$ with $a_i \in \Z$ and $t_i \in \mathbb{T}^n$.
The canonical epimorphism $\pi:\, P \longrightarrow P/I$ satisfies $\pi(f) = \sum_{i=1}^\mu a_i \overline{t}_i$. 
Moreover, for a given generating set of $P/I$, we have a canonical surjective map $\varphi:\; P/I \longrightarrow \Z^\mu$. 
Combining this map with~$\pi$, we obtain a $\Z$-linear surjective map $\RV_{\mathcal{O}_\sigma}:\, 
P \longrightarrow \Z^\mu$ which sends~$f$ to $(a_1, \dots, a_\mu)$. 

Given a strong Gr\"obner basis of the ideal~$I$, 
we can now compute the kernel of the map~$\varphi$ as follows.

\begin{algorithm}{\bf (Computing a Module Presentation)}\label{alg:module_presentation}\\
Let $I$ be an ideal in~$P$, let $\sigma$ be a term ordering on~$\mathbb{T}^n$, let~$G$ 
be a minimal strong $\sigma$-Gr\"obner basis of~$I$, and let $\{t_1, \dots, t_k\} = \mathbb{T}^n \setminus L$, 
where~$L$ equals $\{m \in \LM(I) \mid \LC(m) = 1\}$ as in Proposition~\ref{prop:macaulay}.
Consider the following instructions.
\begin{enumerate}
\item[(1)] Start with an empty list $U = [\;]$ and $\mathcal{O} = [t_1, \dots, t_k]$.

\item[(2)] If $\mathcal{O}$ is empty, return the list~$U$ and stop. Otherwise, choose a term~$t$ in~$\mathcal{O}$
and remove it from~$\mathcal{O}$.

\item[(3)] Find the smallest integer $\ell > 1$ such that $\ell\, s \in \LM(G)$ for some $s \in \mathbb{T}^n$ with $s \mid t$. 
If no such integer exists, continue with step~(2).

\item[(4)] Let $c \in \Z^k$ be the coefficient vector representing~$\ell\,t$,
and let $d\in\Z^k$ be the coefficient vector representing $\NF_{\sigma,I}(\ell\, t)$ with respect to
$(t_1, \dots, t_k)$. Append $c-d$ to~$U$ and continue with step~(2).
\end{enumerate}
This is an algorithm which computes a list of tuples $U\subseteq \Z^k$ such 
that we have $P/I \cong \Z^k/ \langle U \rangle$.
\end{algorithm}

\begin{proof}
By Proposition~\ref{prop:macaulay}, the residue classes of the terms $t_1, \dots, t_k$ generate the
$\Z$-module~$P/I$. Assume that $t_1 >_\sigma t_2 >_\sigma \cdots >_\sigma t_k$, and consider the
$\Z$-module homomorphism
$$
\varphi:\; \Z^k \longrightarrow P/I \quad\hbox{given by\ } (c_1, \dots, c_k) \mapsto c_1 t_1 + \cdots + c_k t_k.
$$
Clearly, we have $\Ker(\varphi) \supseteq \langle U \rangle$. Assume that the converse containment does not hold.
Then there exists a tuple $c = (c_1, \dots, c_k) \in \Ker(\varphi)$ such that  $f = c_1 t_1 + \cdots + c_k t_k \in I$ and
$c \notin U$. Choose~$c$ with this property such that $\LT(f)$ is minimal. Since $f \in I$, there exists a polynomial
$g \in G$ with $\LM(g) \mid \LM(f) = c_i t_i$. 

Notice that this implies $\LC(g) > 1$. Otherwise, the term~$t_i$ would be divisible by~$\LT(g)$, and this 
would imply $t_i \in L$, a contradiction.

Consequently, there is an element $d = (d_1, \dots, d_k) \in U$ with
$d_1 = \cdots = d_{i-1} = 0$ and $\ell d_i  = c_i$ for some $\ell \in \Z$. The tuple $c-\ell d$ corresponds
to a polynomial whose leading term is smaller than $\LT(f)$. This shows $c-\ell d \in U$, but then we get
$c \in U$ in contradiction to our assumption. Hence we have the equality $\Ker(\varphi) = \langle U \rangle$
and $\varphi$ induces the inverse of the desired isomorphism.
\end{proof}

Given a strong Gr\"obner basis of an ideal~$I$ as above, an explicit representation of~$P/I$ 
can now be obtained as follows.

\begin{cor}{\bf (Computing an Explicit Representation)}\\
Let $I$ be an ideal in~$P$ such that $P/I$ is a finite $\Z$-algebra, let~$\sigma$ be a term ordering
on~$\mathbb{T}^n$, and let~$G$ be a minimal strong $\sigma$-Gr\"obner basis of~$I$. Consider the following
instructions.
\begin{enumerate}
\item[(1)] Compute the set $\{t_1, \dots, t_k\} = \mathbb{T}^n \setminus L$, where the set~$L$ equals
$\{m \in \LM(I) \mid \LC(m) = 1\}$.

\item[(2)] Apply Algorithm~\ref{alg:module_presentation} to compute generators of a submodule
$V \subseteq \mathbb{Z}^k$ such that $P/I \cong \mathbb{Z}^k/V$.

\item[(3)] For $i,j = 1,\dots,k$, use the Division Algorithm with respect to~$G$ to compute the normal form
$\NF_{\sigma, I}(t_i t_j) = \sum_{\ell=1}^k c_{ij\ell} t_\ell$ with $c_{ij\ell}\in\Z$.

\item[(4)] Return the residue classes of $t_1, \dots, t_k$ in~$P/I$, the generators of~$V$, and
the coefficients $c_{ij\ell}$ for $i,j,\ell = 1,\dots,k$.
\end{enumerate}
This is an algorithm which computes an explicit representation of $P/I$ in polynomial time in the bit complexity
of the input~$G$.
\end{cor}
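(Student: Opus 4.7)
The plan is to establish correctness by assembling the three preceding building blocks of the subsection, and then to bound the running time of each of the four steps in the bit complexity of~$G$.

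For correctness, step~(1) produces the set $\mathcal{O}_\sigma = \mathbb{T}^n \setminus L$, whose residue classes $\overline{t}_1, \dots, \overline{t}_k$ generate $P/I$ as a $\Z$-module by Proposition~\ref{prop:macaulay}; finiteness of this set is forced by the assumption that $P/I$ is a finite $\Z$-algebra, since a generating set of a finitely generated $\Z$-module can always be taken finite and $\mathcal{O}_\sigma$ is determined purely by the monic elements of $\LM(G)$. Step~(2) invokes Algorithm~\ref{alg:module_presentation} to compute generators of a submodule $V \subseteq \Z^k$ with $P/I \cong \Z^k/V$, which encodes the $\Z$-linear syzygies among $\overline{t}_1,\dots,\overline{t}_k$. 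Step~(3) computes, for each product $t_i t_j$, the normal form $\NF_{\sigma,I}(t_i t_j) = \sum_{\ell=1}^k c_{ij\ell} t_\ell$, and the coefficients $c_{ij\ell}$ are by construction the structure constants of $P/I$ with respect to $(\overline{t}_1,\dots,\overline{t}_k)$. Collected in step~(4), these data form precisely an explicit presentation in the sense of Remark~\ref{remark:input}.

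For the complexity, the set $\mathcal{O}_\sigma$ in step~(1) can be enumerated incrementally starting from~$1$ by multiplying with the variables and discarding multiples of any monic leading monomial of~$G$; this costs polynomial time in~$k$ and the bit complexity of~$G$. Step~(2) is polynomial time by the analysis of Algorithm~\ref{alg:module_presentation}, and the bit sizes of the generators of~$V$ are polynomially bounded because the underlying linear algebra reduces to integer Hermite/Smith normal form computations (cf.~Remark~\ref{remark:computs_over_Z}). In step~(3) there are $O(k^2)$ normal form computations, each consisting of a polynomial number of reduction steps against~$G$; the degrees of the $t_i$ are bounded by the largest degree appearing in $\LM(G)$, so each monomial $t_i t_j$ has bit complexity polynomial in that of~$G$.

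The main obstacle is to certify that the integer coefficients arising during the Division Algorithm with respect to the strong Gr\"obner basis~$G$, together with the entries of the syzygy generators produced by Algorithm~\ref{alg:module_presentation}, all remain of bit size polynomial in that of~$G$. This requires standard but nontrivial bit-size bounds for division-quotients over $\Z[x_1,\dots,x_n]$; once these bounds are in place, summing the costs across steps~(1)--(4) yields the claimed polynomial running time.
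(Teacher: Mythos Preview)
Your approach matches the paper's: invoke Proposition~\ref{prop:macaulay} for the generating set, Algorithm~\ref{alg:module_presentation} for the syzygy module, and the Division Algorithm for the structure constants, then observe polynomial complexity. The only addition in the paper's proof is a one-line check that $\NF_{\sigma,I}(t_it_j)$ is indeed supported on $\{t_1,\dots,t_k\}$ (any other term~$s$ would lie in~$L$ and hence be reducible by a monic element of~$G$), while on the complexity side your discussion is actually more careful than the paper's, which simply asserts that each step is ``clearly polynomial.''
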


\begin{proof}
The residue classes of $t_1, \dots, t_k$ generate $P/I$ as a $\Z$-module by Proposition~\ref{prop:macaulay}.
Algorithm~\ref{alg:module_presentation} then correctly computes~$V$ such that $P/I \cong \mathbb{Z}^k/V$. 
Finally, we check that $\NF_{\sigma, I}(t_i t_j)$ is of the form given in step~(4). Let~$s$ be a term not
contained in $\{t_1, \dots, t_k\}$. Suppose that $\NF_{\sigma, I}(t_i t_j)$ contains a monomial $ds$ with
$d \in \mathbb{Z}$. Then we have $ds \notin \LM(I)$, and in particular $s \notin L$, a contradiction.
The time complexity of each step is clearly polynomial.
\end{proof}

In conclusion, we can see that an explicit representation of a finite $\Z$-algebra~$R$ as in
Remark~\ref{remark:input} is equivalent to knowing a strong Gr\"obner basis of an ideal~$I$
in $P=\Z[x_1,\dots,x_n]$ such that $R=P/I$. Traditionally, many of the algorithms presented in this 
paper were executed using the calculation of a strong Gr\"obner basis. However, as there is no polynomial
time bound for a suitable version of Buchberger's Algorithm, the complexity bounds shown here would be
impossible if~$R$ were only given via $R=P/I$. As explicit representations of the type described in
Remark~\ref{remark:input} occur in many contexts (see for instance~\cite{KMW}), we hope that the
algorithms and complexity bounds developed here may prove useful.

\bigbreak


\begin{thebibliography}{10}

\bibitem{AKR} J.\ Abbott, M.\ Kreuzer, and L. Robbiano, Computing zero-dimensional schemes,
J.\ Symbolic Comput.\ {\bf 39} (2005), 31-49.

\bibitem{AL} W.\ Adams and P.\ Loustaunau, {\it An Introduction to Gr\"obner Bases},
Graduate Studies in Math.~{\bf 3}, Amer.\ Math.\ Soc., Providence 1994.

\bibitem{ApCoCoA} The ApCoCoA Team, ApCoCoA: Applied Computations in Computer Algebra,
available at\\ {\tt apcocoa.uni-passau.de}

\bibitem{Ayo} C.\ W.\ Ayoub, The decomposition theorem for ideals in polynomial rings over a domain,
J.\ Algebra {\bf 76} (1982), 99-110.

\bibitem{Be1} E.\ R.\ Berlekamp, Factoring polynomials over finite fields, Bell Syst.\ Tech.\ J.\ {\bf 46} (1967), 1853-1859.

\bibitem{Be2} E.\ R.\ Berlekamp, Factoring polynomials over large finite fields, Math.\ Comp.\ {\bf 24} (1970), 713-735.

\bibitem{Bou} N.\ Bourbaki, {\it Elements of Mathematics, Algebra I, Chapters 1-3}, Addison Wesley Publ., 
Reading 1974

\bibitem{Cox} D.A. Cox, Solving equations via algebras, in: A. Dickenstein and I. Emiris (eds),
{\it Solving Polynomial Equations}, Alg.\ and Comp.\ in Math.\ {\bf 14}, Springer-Verlag, Berlin 2005, 
pp.\ 63-124.

\bibitem{DGP} W.\ Decker, G.-M.\ Greuel, and G.\ Pfister, Primary decomposition: algorithms and comparisons,
in: B.\ H.\ Matzat, G.-M.\ Greuel, and G.\ Hiss(eds), Algorithmic Algebra and Number Theory, 
Springer, Berlin 1999, pp.\ 187-220.

\bibitem{DK} Y.\ A.\ Drozd and V.\ V.\ Kirichenko, {\it Finite Dimensional Algebras}, Springer-Verlag, Berlin 1994.

\bibitem{EG} W.\ Eberly and M.\ Giesbrecht, Efficient decompositions of associative algebras over finite fields,
J.\ Symb.\ Comput.\ {\bf 29} (2000), 441-458.

\bibitem{Ev} S.\ A.\ Evdokimov, Factorization of polynomials over finite fields in subexponential time under GRH,
in: Proceedings of the First International ANTS Symposium, LNCS {\bf 877}, Springer-Verlag, Berlin 1994, pp.\ 209-219.

\bibitem{FR} K.\ Friedl and L.\ R\'onyai, Polynomial time solutions of some problems in computational algebra,
in: Proceedings of the seventeenth annual ACM symposium on Theory of computing, Association for Computing Machinery,
New York 1985, pp.\ 153-162.

\bibitem{Ga} S.\ Gao, On the deterministic complexity of factoring polynomials, J.\ Symb.\ Comput. {\bf 31} (2001), 19-36.

\bibitem{GWW} S.\ Gao, D.\ Wan, M.\ Wang, Primary decomposition of zero-dimensional ideals over finite
fields, Math.\ Comput.\ {\bf 78} (2008), 509–521.

\bibitem{vzG} J.\ von zur Gathen, Factoring polynomials over finite fields: a survey, J.\ Symb.\ Comput.\ {\bf 31}
(2001), 3-17.

\bibitem{vzGG} J.\ von zur Gathen and J.\ Gerhard, {\it Modern Computer Algebra}, Cambridge University Press,
Cambridge 1999.

\bibitem{IKRS} G. Ivanyos, M. Karpinski, L. R\'onyai, and N. Saxena, Trading GRH for algebra: algorithms for
factoring polynomials and related structures, Math.\ Comp.\ {\bf 81} (2012), 493-531.

\bibitem{KaBa} R.\ Kannan, and A.\ Bachem, Polynomial algorithms for computing the Smith
and Hermite normal forms of an integer matrix, SIAM J.\ Comput.\ {\bf 4} (1979), 499-507.

\bibitem{KMW} M.\ Kreuzer, A.\ Miasnikov, and F.\ Walsh, Decomposing finite $\Z$-algebras, preprint 2023,
available at {\tt arXiv:2308.01735 [math.RA]}.

\bibitem{KR1} M.\ Kreuzer and L.\ Robbiano, {\it Computational Commutative Algebra 1},
Springer-Verlag, Berlin 2000.

\bibitem{KR2} M.\ Kreuzer and L.\ Robbiano, {\it Computational Commutative Algebra 2},
Springer-Verlag, Berlin 2008.

\bibitem{KR3} M. Kreuzer and L. Robbiano, {\it Computational Linear and Commutative Algebra},
Springer Int.\ Publ., Cham 2016.

\bibitem{Laz} F. Lazebnik, On systems of linear Diophantine equations, Math.\ Mag.\ {\bf 4} (1996), 261-266.

\bibitem{LS} H.\ W.\ Lenstra and A.\ Silverberg, Algorithms for commutative algebras over the rational numbers,
Found.\ Comput.\ Math.\ {\bf 18} (2018).

\bibitem{LLL} A.\ K.\ Lenstra, H.\ W.\ Lenstra, Jr., and L.\ Lov\'asz, Factoring polynomials with rational
coefficients, Math.\ Ann.\ {\bf 261} (1982), 515–534.

\bibitem{PSS} G.\ Pfister, A.\ Sadiq, and S.\ Steidel, An algorithm for primary decomposition
in polynomial rings over the integers, Central Europ.\ J.\ Math.\ {\bf 9} (2011), 897-904.

\bibitem{Ro} L.\ R\'onyai, Computing the structure of finite algebras, J.\ Symb.\ Comput.\ {\bf 9} (1990), 355-373.

\bibitem{Sa} C.\ Saha, Factoring polynomials over finite fields using balance test, in: 25th International Symposium
on Theoretical Aspects of Computer Science, Schloss Dagstuhl -- Leibniz-Zentrum f\"ur Informatik, Dagstuhl 2008, pp.\ 609-620.

\bibitem{Se} A.\ Seidenberg, Constructions in a polynomial ring over the ring of integers, Amer.\ J.\ Math.\ {\bf 100} 
(1978), 685-703.

\bibitem{Sj1} A.\ Storjohann, Near optimal algorithms for computing Smith normal forms of integer matrices,
in: Proc.\ Int.\ Symp.\ on Symbolic and Algebraic Computation (ISSAC'96), ACM, New York 1996, pp.\ 267–274.

\bibitem{Sj2} A.\ Storjohann, A fast + practical + deterministic algorithm for triangularizing
integer matrices, Technical Report {\bf 255}, ETH Z\"urich, 1996.

\end{thebibliography}
\end{document}